\newtheorem*{notn}{Notations}
\newtheorem{thm}{Theorem}[section]
\newtheorem{Def}[thm]{Definition}
\newtheorem{Rem}[thm]{Remark}
\newtheorem{Cor}[thm]{Corollary}
\newtheorem{Pro}[thm]{Proposition}
\newtheorem{ex}[thm]{Example}
\newtheorem{prob}[thm]{Problem}
\newtheorem{lemma}[thm]{Lemma}
\newcommand{\bdfn}{\begin{Def} \rm}
\newcommand{\edfn}{\end{Def}}
\newcommand{\tfae}{the following are equivalent}
\newcommand{\ra}{\rightarrow}
\newcommand{\Ra}{\Rightarrow}
\newcommand{\Lglra}{\Longleftrightarrow}
\newcommand{\es}{\emptyset}
\newcommand{\ci}{\subseteq}
\newcommand{\al}{\alpha}
\newcommand{\be}{\beta}
\newcommand{\de}{\delta}
\newcommand{\e}{\varepsilon}
\newcommand{\la}{\lambda}
\newcommand{\Si}{\Sigma}
\newcommand{\ga}{\gamma}
\newcommand{\Ga}{\Gamma}
\newcommand{\La}{\Lambda}
\newcommand{\mb}{\mathbb}
\newcommand{\sm}{\setminus}
\newcommand{\iy}{\infty}
\newcommand{\beqa}{\begin{eqnarray*}}
\newcommand{\eeqa}{\end{eqnarray*}}
\newcommand{\vertiii}[1]{{\left\vert\kern-0.25ex\left\vert\kern-0.25ex\left\vert #1 
    \right\vert\kern-0.25ex\right\vert\kern-0.25ex\right\vert}}
\newcounter{cnt1}
\newcounter{cnt2}
\newcounter{cnt3}
\newcounter{cnt4}
\newcommand{\blr}{\begin{list}{$($\roman{cnt1}$)$} {\usecounter{cnt1}
\setlength{\topsep}{0pt} \setlength{\itemsep}{0pt}}}
\newcommand{\blR}{\begin{list}{\Roman{cnt4}.\ } {\usecounter{cnt4}
\setlength{\topsep}{0pt} \setlength{\itemsep}{0pt}}}
\newcommand{\bla}{\begin{list}{$(\alph{cnt2})$} {\usecounter{cnt2}
\setlength{\topsep}{0pt} \setlength{\itemsep}{0pt}}}
\newcommand{\bln}{\begin{list}{$($\arabic{cnt3}$)$} {\usecounter{cnt3}
\setlength{\topsep}{0pt} \setlength{\itemsep}{0pt}}}
\newcommand{\el}{\end{list}}
\begin{document}
\title[On Hahn-Banach smoothness of $L_1$-preduals]{On Hahn-Banach smoothness of $L_1$-preduals and related $w^*-w$ points of continuity of unit balls of dual spaces}
\author{Sainik Karak
}

\newcommand{\Addresses}{{
  \bigskip
  \footnotesize

  Sainik Karak, \textsc{Indian Institute of Technology Hyderabad,
    India}\par\nopagebreak
  \textit{E-mail address}, Sainik Karak: \texttt{ma22resch11001@iith.ac.in}

  \medskip
}}

\subjclass[2000]{Primary 46A22, 46B20, 46E15 Secondary 46B22, 46M05 \hfill \textbf{\today} }
\keywords{Hahn-Banach smooth,
	weak Hahn-Banach smooth, $L_1$-predual, $M$-embedded spaces, point of continuity}

\begin{abstract}
This article aims to examine the Hahn-Banach smoothness of Banach spaces and its connections to various geometrical aspects. We examine the circumstances that allow linear functionals to have unique norm-preserving extensions, with particular attention to the behavior of these properties in $L_1$-preduals and in spaces of affine continuous functions. Banach spaces which are $L_1$-preduals and also Hahn-Banach smooth are completely characterized. It is demonstrated that if $X$ is an $M$-embedded space then $X^*$ admits a predual which is not weakly Hahn-Banach smooth. It is derived that, when $S$ is a compact convex set where each point in $ext(S)$ is a limit point of $ext(S)$ and also represents a split face, no subspace of $A(S)$ retains the property-$(wU)$ in $A(S)^{**}$. Furthermore, when $X=C_0(L)$, in the context of a locally compact Hausdorff space $L$, the continuity of the identity mapping $I:(B_{X^*},w^*)\to (B_{X^*},w)$ in $ext (B_{X^*})$ significantly influences the subspaces of $X$ that have unique extension property in $X^{**}$. Collectively, this study provides structural characterizations of specialized geometric property, so called Hahn-Banach smoothness, and offers solutions to some natural problems enlisted at the beginning that involve spaces that are $L_1$-preduals and also spaces that are $M$-embedded.
\end{abstract}

\maketitle

\section{Introduction}
\subsection{Preliminaries:}
The problem of extending linear functionals, with the Hahn-Banach theorem at the forefront, is one of the classics in Banach space theory.
A starting point of the literature in this area is a seminal paper by Robert Phelps (\cite{Phelps}) in which the author investigated the so-called {\it property-$(U)$} for the first time. Numerous authors have investigated this property and its several variations. In this paper we focus on investigating the following properties.

\bdfn
Let $Y$ be a subspace of a Banach space $X$. $Y$ is said to have,
\bla
\item \textit{property-$(U)$} if every linear functional on $Y$ has a unique norm preserving extension over $X$.
\item \textit{property-$(wU)$} if every linear functional on $Y$ which attains its norm has a unique norm preserving extension over $X$.
\el
\edfn

When a Banach space $X$ has property-$(U)$  ($(wU)$) in its canonical embedding into $X^{**}$, $X$ is said to be {\it Hahn-Banach smooth} ({\it weakly Hahn-Banach smooth}). It is well-known that both properties are hereditary, one can check related discussion in a recent article \cite{PR}.
It is not known whether a Banach space is Hahn-Banach smooth if it is known to be weakly Hahn-Banach smooth. For a discrete set $\Ga$, $c_0(\Ga)$ is an example of Hahn-Banach smooth space. Here $c_0(\Ga)$ consists of all $\varphi:\Ga\ra\mb{R}$, so that the set $\{\ga:|\varphi (\ga)|>\e\}$ is at most finite, for $\e>0$. $K(\ell_p)$, $1<p<\iy$, the space of all compact operators on $\ell_p$, are also examples of Hahn-Banach smooth spaces. On the other hand $c$, the space of all convergent scalar sequences, $A(S)$, the scalar valued affine continuous functions defined on an infinite dimensional compact convex set $S$ are some examples which are not (weak) Hahn-Banach smooth spaces.

For a Banach space $X$, by $NA_1(X)$ we denote the set of functionals in $X^\ast$ of unit norm that attain their norm on the unit ball of $X$. When $Y$ is a subspace of $X$ and $y^*\in Y^*$, by $HB(y^*)$ we denote the set $\{x^*\in X^*:x^*|_{Y}=y^*, \|y^*\|=\|x^*\|\}$. $S_X, B_X$ stand for the closed unit sphere and closed unit ball of $X$ respectively. By $ext (B_X)$ we mean the set of all extreme points of $B_X$. For two Banach spaces $X_1$ and $X_2$, by $X_1\cong X_2$ we mean $X_1$ is isometrically isomorphic to $X_2$. The so called weak and weak-$*$ topologies are denoted by $w$ and $w^*$ respectively.
We denote the unit vector $(0,\ldots,0,1,0,\ldots)$, where $1$ appears in the $n$-th entry of the classical sequence spaces, viz. $c_0, c, \ell_1$, by $e_n$.

In \cite{FS}, Sullivan studied Hahn-Banach smoothness of Banach spaces. It is well-known that spaces which are Hahn-Banach smooth share nice geometric properties; viz. they are {\it Asplund spaces}. We refer the reader to Chapter~7 of \cite{FHHM} for this notion and the vast literature related to such spaces.

We refer to the following characterizations for Hahn-Banach smooth spaces from \cite{HL}. We also refer to the article \cite{JH} where the notion of the {\it Asymptotically norming property} (in short, ANP) was introduced.

\bdfn\label{D4}
\bla
\item A sequence $(f_n)$ in $S_{X^*}$ is said to be $w^*$-asymptotically normed if for any $\e>0$ there exists $x\in B_X$ such that $f_n(x)>1-\e$ for all but finitely many $n$'s.
\item The above $(f_n)$ is said to be $w^*$-ANP-III if $\cap_n \overline{conv}\{f_k:k\geq n\}\neq\es$.
\item $X^*$ is said to be $w^*$-ANP-III if any asymptotically normed sequence in $S_{X^*}$ is $w^*$-ANP-III.
\el
\edfn

In \cite{HL}, Hu and Lin considered the notion $w^*$-ANP-III if there exists an equivalent norm in $X^*$ where the above condition holds.

\begin{thm}\cite[Theorem~3.2]{HL}\label{T5}
Let $X$ be a Banach space. Then \tfae.
\bla
\item $X$ is Hahn-Banach smooth.
\item The topologies $w$ and $w^*$ coincide on $S_{X^*}$.
\item $X^*$ has $w^*$-ANP-III.
\el
\end{thm}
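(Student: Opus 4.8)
The plan is to route everything through the Dixmier decomposition of the third dual. Write $j_X\colon X\ra X^{**}$ and $j_{X^*}\colon X^*\ra X^{***}$ for the canonical embeddings and set $P:=(j_X)^*\colon X^{***}\ra X^*$. A direct computation gives $P\circ j_{X^*}=\mathrm{id}_{X^*}$, so $P$ is a norm-one projection with kernel $X^{\perp}:=j_X(X)^{\perp}$ and range the canonical copy of $X^*$; thus $X^{***}=X^*\oplus X^{\perp}$. Under this identification the norm-preserving extensions to $X^{**}$ of a functional $x^*\in S_{X^*}$ are exactly the $F\in S_{X^{***}}$ with $P(F)=x^*$, so condition (a) is equivalent to: every $F\in S_{X^{***}}$ with $P(F)\in S_{X^*}$ satisfies $F=j_{X^*}(P(F))$. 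I will prove (a)$\Lra$(b), then (b)$\Ra$(c), then (c)$\Ra$(a).

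For (a)$\Ra$(b), recall that on $X^*$ the $w^*$-topology $\si(X^*,X)$ is coarser than the weak topology $\si(X^*,X^{**})$, so it suffices to show that a net $(x^*_\al)\ci S_{X^*}$ with $x^*_\al\ra x^*\in S_{X^*}$ in $w^*$ converges weakly. If it does not, then after passing to a subnet I may assume $(j_{X^*}(x^*_\al))$ is $w^*$-convergent in $B_{X^{***}}$ to some $F\neq j_{X^*}(x^*)$. Testing against $j_X(X)$ gives $P(F)=x^*$, while $w^*$-lower semicontinuity of the norm gives $\|F\|\leq 1$ and $\|F\|\geq\|P(F)\|=1$; hence $F$ is a norm-preserving extension of $x^*$ distinct from the canonical one, contradicting (a). For (b)$\Ra$(a), given $F\in S_{X^{***}}$ with $P(F)=x^*\in S_{X^*}$, use that $S_{X^*}$ is $w^*$-dense in $B_{X^{***}}$ when $X$ is infinite dimensional (the reflexive case is trivial, since there weak and $w^*$ coincide on all of $X^*$) to choose a net $(x^*_\al)\ci S_{X^*}$ with $j_{X^*}(x^*_\al)\ra F$ in $w^*$. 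Restriction to $X$ yields $x^*_\al\ra x^*$ in $w^*$, so by (b) also weakly, whence $F(\eta)=\lim_\al\eta(x^*_\al)=\eta(x^*)$ for every $\eta\in X^{**}$, i.e. $F=j_{X^*}(x^*)$.

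For (b)$\Ra$(c), let $(f_n)\ci S_{X^*}$ be $w^*$-asymptotically normed and put $C_n:=\ov{conv}\{f_k:k\geq n\}$ (norm closure). Pick a $w^*$-cluster point $F\in B_{X^{***}}$ of $(j_{X^*}(f_n))$ and a subnet $j_{X^*}(f_{n_\ga})\ra F$. For each $\e>0$ the asymptotic-norming hypothesis produces $x\in B_X$ with $f_n(x)>1-\e$ for all but finitely many $n$, so $P(F)(x)=\lim_\ga f_{n_\ga}(x)\geq 1-\e$; thus $g:=P(F)\in S_{X^*}$. Restricting to $X$ gives $f_{n_\ga}\ra g$ in $w^*$, and since $(f_{n_\ga})\ci S_{X^*}$ and $g\in S_{X^*}$, hypothesis (b) upgrades this to weak convergence $f_{n_\ga}\ra g$. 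As the subnet is eventually inside each tail $\{f_k:k\geq n\}$, $g$ belongs to the weak closure of that tail, hence to $C_n$ by Mazur's theorem; therefore $g\in\bigcap_n C_n\neq\es$.

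For (c)$\Ra$(a), argue by contraposition: if (a) fails there are $x^*\in S_{X^*}$ and $F\in S_{X^{***}}$ with $P(F)=x^*$ but $F-j_{X^*}(x^*)\in X^{\perp}\sm\{0\}$, so I may fix $\eta\in B_{X^{**}}$ with $F(\eta)-\eta(x^*)=:\de>0$. Suppose I can extract a \emph{sequence} $(f_n)\ci S_{X^*}$ with $j_{X^*}(f_n)\ra F$ in $w^*$. Then $(f_n)$ is $w^*$-asymptotically normed, since for each $\e$ a vector $x\in B_X$ with $x^*(x)>1-\e$ satisfies $f_n(x)\ra P(F)(x)=x^*(x)$; yet any $g\in\bigcap_n\ov{conv}\{f_k:k\geq n\}$ must satisfy $g(x)=\lim_n f_n(x)=x^*(x)$ for all $x\in X$ and $\eta(g)=\lim_n\eta(f_n)=F(\eta)$, forcing $g=x^*$ and $\eta(x^*)=F(\eta)$, which contradicts $\de>0$. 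Hence $\bigcap_n\ov{conv}\{f_k:k\geq n\}=\es$, violating (c). The main obstacle is precisely this sequence extraction: Goldstine's theorem only furnishes a net, and a general $F\in X^{***}$ need not be a $w^*$-sequential limit of sphere elements unless $B_{X^{***}}$ is $w^*$-metrizable. I expect to remove it by a separable reduction — localizing the failure of (a) to a separable subspace $Z\ci X$ on whose relevant dual balls the $w^*$-topology is metrizable, running the sequential construction there, and transferring the conclusion back via the hereditary nature of property-$(U)$ recorded in the Introduction. Establishing that Hahn-Banach smoothness, condition (b), and $w^*$-ANP-III are all separably determined, so that this localization is legitimate, is the delicate point of the argument.
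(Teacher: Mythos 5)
This statement is quoted verbatim from Hu and Lin \cite[Theorem~3.2]{HL}; the paper offers no proof of it, so there is nothing internal to compare your argument against. Judged on its own terms, your setup via the Dixmier decomposition $X^{***}=X^*\oplus X^{\perp}$ is sound, and the implications (a)$\Leftrightarrow$(b) and (b)$\Rightarrow$(c) are correct and complete: identifying the norm-preserving extensions of $x^*\in S_{X^*}$ with the fibre $P^{-1}(x^*)\cap S_{X^{***}}$, then using $w^*$-compactness of $B_{X^{***}}$ together with $w^*$-lower semicontinuity of the norm for (a)$\Rightarrow$(b), Goldstine for (b)$\Rightarrow$(a), and Mazur for (b)$\Rightarrow$(c), is exactly the right machinery, and each step checks out.

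The gap you flag in (c)$\Rightarrow$(a) is genuine, and the repair you sketch would not close it. You need a \emph{sequence} $(f_n)\subseteq S_{X^*}$ with $j_{X^*}(f_n)\to F$ in $\sigma(X^{***},X^{**})$, and you propose to produce it by localizing to a separable subspace $Z\subseteq X$ on which the relevant dual ball is $w^*$-metrizable. But the topology that has to be metrized is $\sigma(Z^{***},Z^{**})$ on $B_{Z^{***}}$, which is metrizable only when $Z^{**}$ is separable; separability of $Z$ alone buys nothing here (for instance, no subspace containing an isomorphic copy of $\ell_1$ has separable dual, let alone separable bidual). So even granting that Hahn--Banach smoothness is separably determined, the reduction does not yield the sequence. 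Nor does the weaker sequence that Goldstine \emph{does} provide suffice: choosing $f_n$ to agree with $F$ up to $1/n$ on prescribed vectors $x_1,\dots,x_n\in B_X$ and on one fixed $\eta\in X^{**}$ forces any $g\in\bigcap_n\overline{conv}\{f_k:k\geq n\}$ to agree with $x^*$ only on the countably many test vectors $x_i$, not on all of $X$, so one cannot conclude $g=x^*$ and no contradiction with (c) results. The implication (c)$\Rightarrow$(a) is where the real content of Hu--Lin's theorem lies and requires an idea not present in your proposal; as written, your argument establishes only (a)$\Leftrightarrow$(b)$\Rightarrow$(c).
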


This article examines the topologies of $w$ and $w^*$ on the surface of the dual unit ball. The collection of points at which the identity mapping $I:(B_{X^*},w^*)\ra (B_{X^*},w)$ is continuous corresponds to the locations where the two topologies align. When there is no chance of misunderstanding, we call this set the set of {\it points of continuity} (in short PC). We cite a recent work by Daptari et al. \cite{DMR} in this regard. When $X^*=\ell_1$, several consequences arise from the $w^*$-topologies induced by $c_0$ and $c$. It is important to note that the canonical image of $c$ in its bidual $\ell_\infty$ can be identified with $\hat{c}=\{(\al_n)_{n=0}^\infty : \lim_{n} \al_n \text{ exists and } \al_0 = \lim_{n} \al_n\}$ (see \cite[Exercise~II.2.31]{FHHM}). The set of PC for the $w^*$-topology on $\ell_1$, when induced by $c$, is provided by $\{(a_n)\in S_{\ell_1}: (a_n) \mbox{~is finitely supported}\}$, according to the authors of \cite[Remark~9.5]{DMR}. This is a false claim. In fact, $e_1\in S_{\ell_1}$, which maps each sequence $(\al_n)\in \hat{c}$ to its limit, specifically $\lim_n\al_n$, possesses a minimum of two distinct Hahn-Banach extensions: $e_1$ and a Banach limit $L$ defined over $\ell_\infty$. It is evident that both assign $(\al_n)$ to its limit. The properties of the Banach limits can be accessed from p.82 of \cite{JBC} and also from p.81 of \cite{FHHM}.
This discussion is revisited in Remark~\ref{R1}. Theorem~\ref{T3} identifies a set $\Theta\ci S_{\ell_1}$ where the identity map $I:(B_{\ell_1},w^*)\ra (B_{\ell_1},w)$ is continuous. Here $(B_{\ell_1},w^*)$ represents the space $B_{\ell_1}$ with a $w^*$-topology induced by $c$.

\bdfn\label{D3}
A Banach space $X$ is said to be an {\it $L_1$-predual} if $X^*\cong L_1(\mu)$ for some measure space $(\Omega,\Sigma,\mu)$.
\edfn

A major part of this study is devoted to discussing the identification of spaces that are $L_1$-preduals and also Hahn-Banach smooth.
A large class of Banach spaces, including spaces of type $c_0(\Ga)$, where $\Ga$ is a discrete space, falls under this category. We refer to Chapters~ 6 and 7 of \cite{HEL}, the monograph by Lacey, for characterizations
of these spaces and their properties. The following notion is used frequently in this study.

\bdfn\cite{HWW}\label{D1}
\bla
\item A subspace $Y$ of a Banach space $X$ is said to be an {\it $M$-ideal of $X$} if there exists a subspace $Z$ such that $X^*=Y^\perp\oplus_{\ell_1}Z$.
\item A Banach space $X$ is said to be {$M$-embedded} if the canonical image of $X$ is an $M$-ideal in $X^{**}$.
\el
\edfn

See Chapter~ 3 in \cite{HWW} for various examples and properties of spaces that are $M$-embedded. The spaces stated above $c_0(\Ga), K(\ell_p)$ are some natural examples of $M$-embedded spaces. 
It clearly follows that $M$-embedded spaces are Hahn-Banach smooth. Similar to the Hahn-Banach smoothness, this property is also hereditary.
We refer to the monographs  \cite{HWW} and \cite{FHHM} for any unexplained terminology and notation used in this article. 

\subsection{Foundations and summary of this work:}
The following problems may serve as motivation for undertaking this research.

\begin{prob}\label{P10}
Can one identify a minimal subset of $B_{X^{*}}$, uniqueness of extensions of these functionals over $X^{**}$ ensures uniqueness of extensions of all functionals in $X^{*}$?
\end{prob}

\begin{prob}\label{P7}
Let $X$ be an $L_1$-predual. Under what assumption is $X$ Hahn-Banach smooth?
\end{prob}

\begin{prob}\label{P8}
Let $I:(B_{\ell_1},w^*)\ra (B_{\ell_1},w)$ be the identity mapping, where the $w^*$-topology on $\ell_1$ is induced by $c$. What are the points in $S_{\ell_1}$ where the map $I$ is continuous?
\end{prob}

\begin{prob}\label{P9}
To what extent can the geometric structures of $X$ be identified when the identity mapping $I:(B_{X^*},w^*)\ra (B_{X^*},w)$ is continuous on $ext (B_{X^*})$? Furthermore, if no points in $ext (B_{X^*})$ serve as PC for $I$, what geometric structures of $X$ (or its subspaces) are realized in relation to the extensions of functionals?
\end{prob}

The paper is organized as follows.  In section 2, we examine fundamental characteristics of extensions from $X$ to $X^{**}$ concerning linear functionals $x^* \in X^*$. A key aspect of the article \cite{Phelps} is that for $x^*\in X^*$ and a subspace $Y$ of $X$, the set $HB(x^*|_Y)$ can be expressed as $\{x^*-y^\perp:y^\perp\in P_{Y^\perp}(x^*)\}$. Here $P_{Y^\perp}(x^*)=\{y^\perp\in Y^\perp:\|x^*-y^\perp\|=d(x^*,Y^\perp)\}$. Several consequences can be extracted in this context when the canonical image of $X$ is considered as a subspace of $X^{**}$. We answer Problem~\ref{P10} when $X$ is a dual space. An uncountable family of non-isometric preduals of $\ell_1$ are identified which are Hahn-Banach smooth.

In section~3 we completely characterize $L_1$-preduals which are Hahn-Banach smooth. It is found if $(\Omega,\Si,\mu)$ is a measure space and there exists a predual of $L_1(\mu)$ which is Hahn-Banach smooth then $\mu$ must be of specific characteristics.

We answer Problem~\ref{P8} completely in section~4. In section~5 it is found that if $X$ is an $M$-embedded space then there always exists a predual of $X^*$ which is not weakly Hahn-Banach smooth.

Finally, various instances are discussed in the context of PC in section~5. We extend our observations to the spaces of affine functions on compact convex sets. The answer to Problem~\ref{P10} leads to Problem~\ref{P9}.
We answer Problem~\ref{P9} and discuss its various facets in this section.

\section{$U$-embedding of $X$ in $X^{**}$}

This section showcases that the property-$(U)$ (or $(wU)$) of the canonical image of a dual space in its bidual results in the trivial case. We first demonstrate that Theorem~\ref{T5} requires $X$ to be reflexive if it is a dual space. 

\begin{Pro}\label{P6}
Let $X$ be a Banach space such that $X$ has weak-ANP-III. Then $X$ is reflexive. Hence $X$ is reflexive if $X^{**}$ has $w^*$-ANP III.
\end{Pro}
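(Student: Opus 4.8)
The plan is to derive both assertions from James' theorem, together with the topological reformulation of Hahn--Banach smoothness in Theorem~\ref{T5}. Throughout, ``weak-ANP-III'' is read as the verbatim analogue of Definition~\ref{D4} in which the pairing $\langle X^*,X\rangle$ is replaced by the restriction of $\langle X^{**},X^*\rangle$ to $X$; thus a sequence $(x_n)\ci S_X$ is weakly asymptotically normed if for every $\e>0$ there is $f\in B_{X^*}$ with $f(x_n)>1-\e$ for all but finitely many $n$, and it is weak-ANP-III when $\cap_n\overline{conv}\{x_k:k\geq n\}\neq\es$, the closure being the weak (equivalently norm) closure in $X$.

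First I would prove the main implication. Assuming $X$ is \emph{not} reflexive, James' theorem supplies $f\in S_{X^*}$ that does not attain its norm on $B_X$; choosing $x_n\in B_X$ with $f(x_n)\ra1$ and normalising, I obtain $(x_n)\ci S_X$ with $f(x_n)\ra1$. The single normer $f$ witnesses that $(x_n)$ is weakly asymptotically normed, so weak-ANP-III produces a point $x\in\cap_n\overline{conv}\{x_k:k\geq n\}$. Since each $x_k\in B_X$, convexity gives $\|x\|\leq1$; and since $f$ is weakly continuous with $f(x_k)\ra1$, passing to the closure yields $f(x)\geq1$. Hence $f(x)=\|x\|=\|f\|=1$, so $f$ attains its norm at $x\in S_X$, a contradiction. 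Therefore $X$ is reflexive.

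For the second assertion I would argue that $w^*$-ANP-III of $X^{**}$ forces the same conclusion by the same scheme, the only extra ingredient being a descent from $X^{**}$ to $X$. Again assume $X$ is not reflexive and fix, via James, a functional $f\in S_{X^*}$ not attaining its norm on $B_X$ and a sequence $(x_n)\ci S_X$ with $f(x_n)\ra1$. Viewed in $S_{X^{**}}$ this sequence is $w^*$-asymptotically normed (normer $f\in B_{X^*}$), so $w^*$-ANP-III of $X^{**}$ yields $F\in\cap_n\overline{conv}^{\,w^*}\{x_k:k\geq n\}$; exactly as above one checks $\langle F,f\rangle=\|F\|=1$, so $F\in S_{X^{**}}$.

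The crux is to show $F\in X$. By Theorem~\ref{T5} applied to $X^*$, the hypothesis that $X^{**}$ has $w^*$-ANP-III is equivalent to Hahn--Banach smoothness of $X^*$, hence to the coincidence of the weak and $w^*$ topologies on $S_{X^{**}}$. Fixing $n$, write $F$ as the $w^*$-limit of a net $(y_\beta)$ from $conv\{x_k:k\geq n\}\ci B_X$; then $f(y_\beta)\ra\langle F,f\rangle=1$ forces $\|y_\beta\|\ra1$, so the normalised net $z_\beta=y_\beta/\|y_\beta\|\in S_X$ still converges $w^*$ to $F$. Because the weak and $w^*$ topologies agree on $S_{X^{**}}$ and $F,z_\beta\in S_{X^{**}}$, the net $(z_\beta)$ converges to $F$ in $\sigma(X^{**},X^{***})$; as $X$ is a norm-closed, hence weakly closed, subspace of $X^{**}$, we conclude $F\in X$. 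Thus $F\in S_X$ with $f(F)=1$, contradicting the choice of $f$, and $X$ is again reflexive. I expect this transfer step --- promoting the $w^*$-cluster point in $X^{**}$ to an honest norm-attaining vector in $X$ via the sphere-coincidence of the two topologies --- to be the main obstacle; the remainder is a routine packaging of James' theorem.
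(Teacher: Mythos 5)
Your proof is correct, but it takes a genuinely different route from the paper's. For the first assertion the paper argues forwards: given an arbitrary $f\in S_{X^*}$, it approximates $f$ by norm-attaining functionals $f_n$ (Bishop--Phelps), takes norming points $x_n\in S_X$, notes that $(x_n)$ is asymptotically normed by $\{f\}$, and invokes Lemma~2.3 of \cite{HL} --- weak-ANP-III forces asymptotically normed sequences to have weakly convergent subsequences --- to get a weak limit $x_0$ with $f(x_0)=\|f\|$; James' theorem then yields reflexivity. You argue by contradiction directly from the intersection condition: a non-attaining $f$ generates an asymptotically normed sequence whose tails' closed convex hulls must meet in a point where $f$ attains its norm. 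Your version is more self-contained (no Bishop--Phelps, no external lemma) and is a valid alternative; the paper's version makes the mechanism behind the Hu--Lin machinery visible. For the second assertion the paper simply writes ``hence'', and for good reason: the convex hulls in the definition of ANP-III are \emph{norm}-closed, and since $X$ is norm-closed in $X^{**}$, the $w^*$-ANP-III condition for $X^{**}$ applied to sequences from $S_X$ (which are $w^*$-asymptotically normed by the same set $B_{X^*}$) is literally weak-ANP-III for $X$. Your descent through Theorem~\ref{T5} and the coincidence of the weak and $w^*$ topologies on $S_{X^{**}}$ does reach the conclusion, but it is far heavier than necessary; and note that you wrote the intersection with $\overline{conv}^{\,w^*}$ --- if the closure in the definition were the $w^*$-closure, the condition would be vacuous by $w^*$-compactness of $B_{X^{**}}$ and Theorem~\ref{T5} could not hold, so it must be the norm closure, in which case your cluster point $F$ already lies in $X$ and the entire ``crux'' step can be deleted.
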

\begin{proof}
Let $f\in S_{X^*}$. Suppose that $(f_n)\ci NA_1(X)$ is such that $f_n\ra f$ in $X^*$ and there exists $(x_n)\ci S_X$ such that $f_n(x_n)=\|f_n\|$. Then $(x_n)\ci S_X$ is asymptotically normed by $\{f\}$. As $X$ has weak-ANP-III, from \cite[Lemma~ 2.3]{HL}, the sequence $(x_n)$ has a weak convergence subsequence. Let us assume  $(x_{n_k})$ be a weak convergent subsequence which converges weakly to $x_0$. Hence $f(x_0)=\lim_k f(x_{n_k})$. Now
$|f(x_{n_k})|=|(f-f_{n_k})(x_{n_k})+f_{n_k}(x_{n_k}))|>\|f_{n_k}\|-\e$, for all $k$ sufficiently large. This leads to $f(x_0)=\|f\|$ and hence $\|x_0\|=1$. This concludes $f\in NA_1(X)$. Since $f$ is arbitrary, $X$ is reflexive.
\end{proof}

We now encounter a few cases where despite the failure of the Hahn-Banach smoothness of $X$, it is possible to find a copy of $X$ in $X^{**}$ other than the canonical one, which satisfies the unique Hahn-Banach extension property. The following example~\ref{E1}, makes it precise. Recall the definition of the James boundary in Banach spaces. We refer the reader to p.132 of \cite{FHHM} in this context.

\bdfn
Let $B\ci B_{X^*}$, where $X$ is a Banach space. $B$ is said to be a James boundary of $X$ if for any $x\in X$ there exists $x^*\in B$ such that $|x^*(x)|=\|x\|$.
\edfn

Let $X$ be a non-reflexive Banach space. Clearly, there exists $x^*\in S_{X^*}$, $x^*\notin NA_1(X)$. Let $B$ be a James boundary for $X^*$.
Finally, there exists $x^{**}\in B$ such that $|x^{**}(x^*)|=1$.    
Let $J_0:X\hookrightarrow X^{**}$ and $J_2:X^{**}\hookrightarrow X^{(4)}$ be the canonical embedding maps. Then it is easy to observe that $J_2(x^{**})|_{X^*}=J_0^{**}(x^{**})|_{X^*}$, although $J_2(x^{**})\neq J_0^{**}(x^{**})$. In fact there exists $x^\perp\in X^{***}$, where $x^\perp|_X=0$ and  $x^\perp (x^{**})\neq 0$ which in other words is $J_2(x^{**})(x^\perp)\neq 0$. On the other hand, we have $J_0^{**}(x^{**})(x^\perp)=0$. Consequently, in this case $J_2(x^{**})$ and $J_0^{**}(x^{**})$ are two distinct Hahn-Banach extensions of $x^{**}$.
Now from a slightly different viewpoint, let $x^{**}\in B$ and $x^*\in X^*(\hookrightarrow X^{***})$ as above; then there exists non-zero $x^{*\perp}\in X^{(4)}$ such that $x^{*\perp}|_{X^*}=0$ and $\|J_2(x^{**})|_{X^*}\|=d(J_2(x^{**}),(X^*)^\perp)=\|J_2(x^{**})+x^{*\perp}\|$. Consequently,
$J_2x^{**}+x^{*\perp}\in X^{(4)}$ is a norm-attaining, and both $J_2x^{**}$ and $J_2x^{**}+x^{*\perp}$ are distinct norm-attaining Hahn-Banach extensions of $x^{**}$. This concludes the following. Here $I$ stands for the identity mapping.

\begin{Cor}\label{C1}
Let $X$ be a Banach space and $B$ be a James boundary for $X^*$, then each of the following is equivalent to reflexivity and hence \tfae.
\bla
\item For every $x^{**}\in B$ there exists a unique Hahn-Banach extension over $X^{***}$. 
\item $I:(B_{X^{**}},w^*)\ra (B_{X^{**}},w)$ is continuous on $ext (B_{X^{**}})\cap NA_1(X^*)$.
\item $I:(B_{X^{**}},w^*)\ra (B_{X^{**}},w)$ is continuous on $B_{X^{**}}$.
\el
\end{Cor}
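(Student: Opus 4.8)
The plan is to show that each of (a), (b), (c) is equivalent to $X$ being reflexive; since the theme of this section is that property-$(U)$-type hypotheses on a dual collapse to the trivial case, reflexivity is the natural target and the stated equivalence follows at once. The cheap implications are reflexivity $\Ra$ (c) $\Ra$ (b) together with reflexivity $\Ra$ (a). Indeed, if $X$ is reflexive then so is $X^*$, so the canonical image of $X^*$ exhausts $X^{***}$; hence on $B_{X^{**}}$ the functionals defining $w=\sigma(X^{**},X^{***})$ are exactly those defining $w^*=\sigma(X^{**},X^*)$, the two topologies coincide, and $I$ is trivially continuous on all of $B_{X^{**}}$, giving (c) and a fortiori (b). For (a), writing $J_1$ for the canonical embedding $X^*\hookrightarrow X^{***}$, reflexivity of $X^*$ forces $J_1(X^*)^\perp=\{0\}$ in $X^{(4)}$, so $x^{**}$ is already defined on all of $X^{***}$ and its extension $J_2(x^{**})$ is the only one.

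For (a) $\Ra$ reflexivity I would merely formalize the computation already carried out before the statement. If $X$ were non-reflexive, James' theorem yields $x^*\in S_{X^*}\sm NA_1(X)$; as $B$ is a James boundary for $X^*$, pick $x^{**}\in B$ with $|x^{**}(x^*)|=\|x^*\|=1$. Since $x^*$ does not attain its norm on $B_X$ one gets $x^{**}\notin J_0(X)$, so there is $\phi\in J_0(X)^\perp\ci X^{***}$ with $\phi(x^{**})\neq0$. Exactly as in the preamble this makes $J_2(x^{**})$ and $J_0^{**}(x^{**})$ two distinct norm-preserving extensions of $x^{**}$, contradicting (a).

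The \emph{substantive step} is (b) $\Ra$ reflexivity, and I would argue contrapositively. Fix $x^*\in S_{X^*}\sm NA_1(X)$. By Bauer's maximum principle the $w^*$-continuous function $e^{**}\mapsto e^{**}(x^*)$ attains its supremum $\|x^*\|=1$ over the $w^*$-compact convex set $B_{X^{**}}$ at some $e^{**}\in ext(B_{X^{**}})$; being attained at $x^*\in S_{X^*}$, this also shows $e^{**}\in NA_1(X^*)$, so $e^{**}\in ext(B_{X^{**}})\cap NA_1(X^*)$. As before $e^{**}\notin J_0(X)$, so choose $\phi\in J_0(X)^\perp$ with $\phi(e^{**})\neq0$. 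By Goldstine there is a net $(x_\ga)\ci B_X$ with $J_0(x_\ga)\ra e^{**}$ in $w^*$; then $\phi(J_0(x_\ga))=0$ for every $\ga$ while $\phi(e^{**})\neq0$, so $(J_0(x_\ga))$ does not converge weakly to $e^{**}$. Thus $I$ fails to be $w^*$-$w$ continuous at the point $e^{**}$ of $ext(B_{X^{**}})\cap NA_1(X^*)$, contradicting (b).

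I expect the difficulty to be bookkeeping rather than depth. The two things to get right are: verifying that $ext(B_{X^{**}})$ itself is a James boundary for $X^*$, which is what produces the extreme, norm-attaining witness $e^{**}$; and keeping the four dual levels straight, in particular the decomposition $X^{***}=J_1(X^*)\oplus J_0(X)^\perp$, which is precisely what guarantees a separating $\phi\in J_0(X)^\perp$ the moment one knows $e^{**}\notin J_0(X)$. Once these are in place, each implication is the same reflexivity collapse read off at a different level.
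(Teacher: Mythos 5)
Your proposal is correct, and its overall strategy --- showing that each of $(a)$, $(b)$, $(c)$ collapses to reflexivity --- is exactly the paper's. The treatment of $(a)$ and $(c)$ matches the paper: $(c)$ is reflexivity because $w=w^*$ on $B_{X^{**}}$ iff $J_1(X^*)=X^{***}$, and the failure of $(a)$ for non-reflexive $X$ is read off from the two extensions $J_2(x^{**})$ and $J_0^{**}(x^{**})$ of a James-boundary point $x^{**}$ peaking at a non-norm-attaining $x^*$, precisely the computation in the preamble. Where you genuinely diverge is in $(b)\Ra$ reflexivity. The paper's route is the ``slightly different viewpoint'' in the preamble: it perturbs $J_2x^{**}$ by an element $x^{*\perp}\in (X^*)^\perp$ to get two \emph{distinct norm-attaining} Hahn--Banach extensions of a point of the James boundary $B$, and then leans (implicitly) on the correspondence between uniqueness of norm-preserving extensions and $w^*$-$w$ points of continuity; note that a point of $B$ need not lie in $ext(B_{X^{**}})$, so the paper's witness does not obviously sit in the set named in $(b)$. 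You instead manufacture the witness $e^{**}$ directly in $ext(B_{X^{**}})\cap NA_1(X^*)$ via Bauer's maximum principle applied to $e^{**}\mapsto e^{**}(x^*)$, and then exhibit the discontinuity concretely with a Goldstine net annihilated by some $\phi\in J_0(X)^\perp$ that does not annihilate $e^{**}$. This is more self-contained (no appeal to the extension/continuity dictionary) and it repairs the extreme-point issue that the paper's terse proof glosses over; the cost is the extra (standard) input of Bauer's maximum principle. Both arguments use James' theorem to produce the non-norm-attaining $x^*$, so neither is more elementary at that step.
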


\begin{ex}\label{E1}
From Corollary~\ref{C1}, the Banach space $\ell_\iy$ can not have property-$(wU)$, when it is considered as a canonical image in its bidual. Now recall that $\ell_\iy^*\cong c_0^\perp\oplus_{\ell_1}\ell_1$ (Yoshida-Hewitt decomposition). Hence $\ell_\iy^{**}\cong c_0^{\perp\perp}\oplus_{\ell_\iy}\ell_1^\perp\cong \ell_\iy\oplus_{\ell_\iy}\ell_1^\perp$. It is clear that the $M$-summand $\ell_\iy$ has property-$(U)$ in $\ell_\iy^{**}$. Although in a dual space an $M$-summand is clearly $w^*$-closed  (see \cite[Theorem~1.9]{HWW}) and hence the aforesaid $M$-summand $\ell_\iy$ can not be the canonical image in  $\ell_\iy^{**}$.
\end{ex}

\begin{Rem}
A similar conclusion can be drawn for $X^{**}$ when $X$ is an $M$-embedded space. More precisely, let $X$ be an $M$-embedded space; then $X^{(4)}$, the fourth dual of $X$, has a $w^*$-closed subspace $W$ where $W\cong X^{**}$ and $W$ has property-$(U)$ in $X^{(4)}$ although $J_2X^{**}$ cannot have property-$(wU)$ in $X^{(4)}$. In fact, from the discussion before Corollary~\ref{C1} there exists a linear functional $\La\in X^{***}$ such that $\La$ has distinct Hahn-Banach extensions $\tilde{\La}_1$ and $\tilde{\La}_2\in X^{(5)}$. 
\end{Rem}

Note that the example in \ref{E1} is an $L_1$-predual. We now show that an $L_1$-predual which is Hahn-Banach smooth, is not necessarily $M$-embedded.

\begin{ex}\label{E2}
Recall the example stated in \cite{O} and also discussed in \cite{JW}. It is a renorming on $c_0$ defined as follows.

For $0<r<1$ define $Y_r=(c_0,\|.\|_r)$. 

Here $\|(\al_n)\|_r=\sup\left\{\frac{|\al_1|}{r}, |\al_1-\al_2|, \ldots, |\al_1-\al_n|, \ldots\right\}$ for $(\al_n)\in c_0$.

In \cite{JW} the authors have derived that the dual norm on $Y_r^*$ is given by $\|(a_n)\|=r|\sum_{n=1}^\iy a_n|+\sum_{n=2}^\iy |a_n|$ for $(a_n)\in Y_r^*$.

In \cite{O}, Oja observed that $Y_r$ has property-$(U)$ in $Y_r^{**}$. In \cite{JW} Johnson and Wolfe observed that if $P:Y_r^{***}\ra Y_r^*$ is the canonical projection, then $\|I-P\|=1+r$ and hence it follows that $Y_r$ cannot be an $M$-ideal in its bidual. 
We now show that the dual norm in $Y_r^*$ is isometrically isomorphic to $\ell_1$ with its usual norm. Define, $$T:Y_r^*\to\ell_1\text{  by  }T\big((a_n)\big)=\big( r\sum_{n=1}^\iy a_n,a_2,a_3,...\big).$$ It is easy to observe that $T$ is an isometry and onto. This concludes that $Y_r$ is an $L_1$-predual. 
\end{ex}

\begin{lemma}\label{L2}
Let $X$ and $Y$ be two Banach spaces such that $X\cong Y$. Then $\|I-P\|=\|I-Q\|$, where $P:X^{***}\to X^*$ and $Q:Y^{***}\to Y^*$ are projections such that $P(x^{***})=x^{***}|_X$ and $Q(y^{***})=y^{***}|_Y$.
\end{lemma}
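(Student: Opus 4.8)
The plan is to exploit the functoriality of the duality operation together with the naturality of the canonical embedding, so that the entire construction $X\mapsto(X^{***},I-P)$ transports along an isometric isomorphism. Let $\phi\colon X\ra Y$ be a surjective linear isometry witnessing $X\cong Y$, and let $J_X\colon X\ra X^{**}$, $J_Y\colon Y\ra Y^{**}$, $J_{X^*}\colon X^*\ra X^{***}$, $J_{Y^*}\colon Y^*\ra Y^{***}$ denote the canonical embeddings. Passing to adjoints produces surjective isometries $\phi^*\colon Y^*\ra X^*$, $\phi^{**}\colon X^{**}\ra Y^{**}$, and $\phi^{***}\colon Y^{***}\ra X^{***}$, the last being $(\phi^*)^{**}$. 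Under the identifications in the statement, $P$ is read as the idempotent on $X^{***}$ given by $P=J_{X^*}\circ J_X^*$ (restriction to $X$ followed by reinjection of $X^*$), and likewise $Q=J_{Y^*}\circ J_Y^*$; here $I-P$ is the complementary projection onto the annihilator $X^\perp$.

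First I would record the two naturality identities that drive everything: $\phi^{**}\circ J_X=J_Y\circ\phi$, which is naturality of the canonical embedding applied to $\phi$, and $\phi^{***}\circ J_{Y^*}=J_{X^*}\circ\phi^*$, which is the same naturality applied to the isometry $\phi^*$ (using $(\phi^*)^{**}=\phi^{***}$). The central claim is then that $\phi^{***}$ intertwines the two projections, that is, $P\circ\phi^{***}=\phi^{***}\circ Q$ as maps $Y^{***}\ra X^{***}$. I would verify this by a short diagram chase: for $G\in Y^{***}$ one computes $J_X^*(\phi^{***}(G))=G\circ\phi^{**}\circ J_X=G\circ J_Y\circ\phi=\phi^*(J_Y^*(G))$, giving $J_X^*\circ\phi^{***}=\phi^*\circ J_Y^*$. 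Composing on the left with $J_{X^*}$ and invoking the second naturality identity then yields $P\circ\phi^{***}=J_{X^*}\circ\phi^*\circ J_Y^*=\phi^{***}\circ J_{Y^*}\circ J_Y^*=\phi^{***}\circ Q$.

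From the intertwining it follows immediately that $(I-P)\circ\phi^{***}=\phi^{***}\circ(I-Q)$. Since $\phi^{***}$ is a surjective isometry, it carries $B_{Y^{***}}$ bijectively and isometrically onto $B_{X^{***}}$, so $\|(I-P)\circ\phi^{***}\|=\|I-P\|$ on the one hand, while $\|\phi^{***}\circ(I-Q)\|=\|I-Q\|$ on the other (as $\phi^{***}$ preserves norms). Comparing the two sides gives $\|I-P\|=\|I-Q\|$, as desired.

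I expect the only real obstacle to be bookkeeping rather than mathematical depth: one must keep straight whether $P$ is read as the restriction map $X^{***}\ra X^*$ or as the associated idempotent on $X^{***}$ (the two differing by the embedding $J_{X^*}$), and must apply naturality of the canonical embedding twice, once to $\phi$ and once to $\phi^*$, in a consistent direction. Arranging these squares so that the arrows compose correctly is where care is needed; once the intertwining $P\circ\phi^{***}=\phi^{***}\circ Q$ is secured, the norm equality is automatic.
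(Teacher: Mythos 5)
Your proof is correct: the intertwining identity $P\circ\phi^{***}=\phi^{***}\circ Q$, obtained from the naturality of the canonical embedding applied once to $\phi$ and once to $\phi^*$, together with the fact that a surjective isometry preserves operator norms under composition on either side, gives $\|I-P\|=\|I-Q\|$ exactly as claimed. The paper states Lemma~\ref{L2} without proof, and your argument is the standard one the authors evidently have in mind, so there is nothing further to compare.
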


\begin{Rem}
Aided by Example~\ref{E2} and Lemma~\ref{L2} one can conclude that there exist an uncountable collection of non-isometric preduals of $\ell_1$ which are Hahn-Banach smooth but not $M$-embedded.
\end{Rem}

From Example~\ref{E2} it is clear that $Y_r\ncong c_0(\Ga)$ for any discrete space $\Ga$ (see \cite[Proposition~III.2.7]{HWW}).
In section~4, we answer the Problem~\ref{P8} in details.

\section{Characterization of $L_1$-preduals which are Hahn-Banach smooth}

\subsection{On duals of $M$-embedded spaces}

We refer the reader \cite[Proposition~III.2.10]{HWW} in the context of the following theorem.
\begin{thm}\label{T6}
Let $X$ be an $M$-embedded space and $Y$ be a predual of $X^*$. Also, let $T:Y^*\to X^*$ be the isometric isomorphism. Let $y^*\in S_{Y^*}$. Then \tfae.
\bla
\item $I:(B_{Y^*},w^*)\to (B_{Y^*},w)$ is continuous at $y^*$.
\item $T:Y^*\to X^*$ is relatively $w^*(Y)-w$ continuous at $y^*$.
\item $T:Y^*\to X^*$ is relatively $w^*(Y)-w^*(X)$ continuous at $y^*$.
\el
\end{thm}
\begin{proof}
As $T$ is $w-w$ continuous, hence $(a)\
\Ra(b)$ follows. $(b)\Ra (c)$ is obvious.

$(c)\Ra (a).$ Let $y^*\in S_{Y^*}$ and $(y^*_\al)\subseteq B_{Y^*}$ such that $y^*_\al\stackrel{w^*(Y)}\longrightarrow y^*$. Hence from $(c)$, we get $Ty^*_\al\stackrel{w^*(X)}\longrightarrow Ty^*$. Since $X$ is Hahn-Banach smooth and $Ty^*\in S_{X^*}$, we have $Ty^*_\al\stackrel{w}\longrightarrow Ty^*$. This follows that $y^*_\al\stackrel{w}\longrightarrow y^*$ as $T^{-1}$ is $w-w$ continuous. This completes the proof.
\end{proof}

\begin{thm}\label{T4}
Let $X$ be an $L_1$-predual where $I:(B_{X^*},w^*)\ra (B_{X^*},w)$ is continuous on $ext (B_{X^*})$. Then $(ext(B_{X^*}),w^*)$ is a discrete space.
\end{thm}
\begin{proof}
From the given assumption, $X^*\cong L_1(\mu)$ for some positive measure space $(\Omega,\Si,\mu)$. This follows from \cite[p.144]{JBC} that $ext (B_{X^*})=\{\la \chi_A: A \mbox{~is an atom}, \|\la\chi_A\|=1\}$. Let $A$ be an atom and let $\al>0$ such that $\|\al\chi_A\|=1$. Consider $\La=\al\chi_A\in L_\iy(\mu)$ and the slice $S=\{h\in B_{L_1(\mu)}:\La (h)>1/2\}$. Then for an atom $B\neq A$, $\chi_B\notin S$, and so is $\frac{1}{\mu (B)}\chi_B$. This concludes that $(ext (B_{X^*}),w)$ is a discrete space, and so is $(ext (B_{X^*}),w^*)$, from the given assumption. 
\end{proof}

\subsection{Structure of $L_1(\mu)$ which admits a Hahn-Banach smooth predual}

The following result is well-known in the literature of $L_1$-preduals. Here we add a proof of it for the sake of completeness.

\begin{Pro}\label{P3}
Let $X$ be an $L_1$-predual and $x^*\in ext (B_{X^*})$. Then $\ker (x^*)$ is an $M$-ideal in $X$.
\end{Pro}
\begin{proof}
Note that if $x^*\in ext (B_{X^*})$ then $x^*=\al\chi_A$ for some atom $A$ and $\al\in\mb{R}$ such that $\|\al\chi_A\|=1$. Now from \cite[Example~I.1.6]{HWW} it follows that $P:X^*\ra span\{\chi_A\}$ is an $L$-projection, and hence the result follows.
\end{proof}

\begin{thm}\label{T7}
Let $(\Omega,\Sigma,\mu)$ be a measure space such that $L_1(\mu)$ has a predual which is weakly Hahn-Banach smooth. Then $L_1(\mu)\cong \ell_1(\Gamma)$ for some discrete set $\Gamma$. 
\end{thm}

\begin{proof}
We follow the techniques used in \cite[Theorem~3.11]{BR}.
Let $A= ext(B_{L_1(\mu)})$. Note that $B_{L_1(\mu)}=\overline{conv}^{w^*}\{A\}$. From Theorem~\ref{T4} it follows that $(A,w^*)$ is a discrete space. From Proposition~\ref{P3} it now follows that, each $f\in A$, span$\{f\}$ is an $L$-summand. 

{\sc Claim~:} For any $f_1,...,f_n\in A$, we have $B_F=conv\{\pm f_i:i=1,...,n\}$, where $F=span\{f_1,...,f_n\}$. 

Note that if $h\in B_F, \|h\|=1$ and $h=\la_1f_1+\ldots+\la_nf_n$, for some scalars $\la_i, 1\leq i\leq n$. Then $\|\la_1f_1+\ldots+\la_nf_n\|= 1$ and this leads to $|\la_1|+\ldots+|\la_n|= 1$. Changing $f_i$ by $-f_i$ if necessary, we may assume $\la_i\geq 0$. Thus $h\in conv\{\pm f_i:i=1,...,n\}$.

Thus, $\phi:\ell_1(A)\to L_1(\mu)$ defined by $\phi(\al)=\sum_{f\in A}\al(f).f$ is a linear isometry.   
It remains to prove that $\phi$ is onto. 

First observe that $X$ is Hahn-Banach smooth.
Let $f\in L_1(\mu)$ be such that $\|f\|=1$. 
  
Suppose that $(g_\al)\ci conv\{A\}$ such that $g_\al\stackrel{w^*}\longrightarrow f$, where $g_\al=\sum_{i=1}^{n_\al}\la_i^{n_\al} f_i^{n_\al},\,\{f_i^{n_\al}\}_{i=1}^{n_\al}\subseteq A$ and $\sum_{i=1}^{n_\al}|\la_i^{n_\al}|=1$ for all $\al$. This implies $g_\al\stackrel{w}\longrightarrow f$. Consequently, we have $f\in\overline{conv}^{w}\{A\}=\overline{conv}^{\|.\|}\{A\}$.
  
Now observe that $\phi$ is $w-w$ continuous and $\phi(\ell_1(A))$ is weakly closed since $\phi$ is an isometry. For $\al$, define $\Lambda_\al:A\to\mb{R}$ such that $\Lambda_\al(f_i^{n_\al})=\la_i^{n_\al}$ for $1\leq i\leq n_\al$ and $0$ elsewhere with $\sum_{i=1}^{n_\al}|\la_i^{n_\al}|=1$. It is clear that $\phi(\La_\al)=g_\al$ for all $\al$. Now since $\phi(\La_\al)\stackrel{w}\longrightarrow f$ and $\phi(\ell_1(A))$ is weakly closed hence $f\in \phi(\ell_1(A))$. This follows that $L_1(\mu)\cong \ell_1(A)$, where $A$ is a discrete set.
\end{proof}

\subsection{The characterization}

\begin{thm}\label{T8}
Let $X$ be an $L_1$-predual. Then \tfae.
\bla
\item $X$ is Hahn-Banach smooth.
\item There exists an isometric isomorphism $T:X^*\ra \ell_1(\Ga)$, for some discrete space $\Ga$. Let $T_1=T|_{S_{X^*}}$ be the restriction map. Then $T_1:(S_{X^*},w^*(X))\ra (S_{\ell_1(\Ga)},w^*(c_0(\Ga)))$ induces a homeomorphism.
\item Let $T_1=T|_{S_{X^*}}$ be the above restriction map. Then $T_1:(S_{X^*},w^*(X))\ra (S_{\ell_1(\Ga)},w)$ induces a homeomorphism.
\el
\end{thm}
\begin{proof}
$(a)\Ra (b).$ The first part follows from Theorem~\ref{T7}. Since $c_0(\Ga)^*\cong \ell_1(\Ga)$, from Theorem~\ref{T6} the second part follows.

$(b)\Ra (a).$ Since $c_0(\Ga)$ is $M$-embedded, the result follows from Theorem~\ref{T6}.

$(b)\Lglra (c).$ This follows from Theorem~\ref{T6}.
\end{proof}

\section{Identification of PC for $I:(B_{\ell_1},w^*(c))\ra (B_{\ell_1},w)$}

\subsection{Review to the cases for $c_0$ and $c$}

\begin{notn}
Let $X$ be a predual of $\ell_1$. The tuple $(\ell_1,w^*(X))$ refers to the $w^*$-topology on $\ell_1$ induced by $X$. For a Banach space $Z$, we denote the dual space $Z^*$ with the $w^*$-topology as $(Z^*,w^*)$ when there is no chance of ambiguity.
\end{notn}
We restate the cases for $c_0$ and $c$ in this new notation.

\begin{Cor}\label{null-seq}
\bla
\item The identity mapping $I:(B_{\ell_1},w^*(c_0))\ra (B_{\ell_1},w)$ is continuous on $S_{\ell_1}$.
\item The identity mapping $I:(B_{\ell_1},w^*(c))\ra (B_{\ell_1},w)$ is not continuous on $S_{\ell_1}$.
\el
\end{Cor}

\begin{Rem}\label{R1}
We now illustrate Corollary~\ref{null-seq}$(b)$ by identifying a sequence $(f_m)$ in $B_{\ell_1}$ and $f\in S_{\ell_1}$ where $f_m\stackrel{w^*(c)}\ra f$ but $f_m\stackrel{w}\nrightarrow f$. We follow the notations as in \cite[Example~6.5]{DMR} in this discussion. 
Let $(e_n)_n$ be the standard unit vector basis for $c_0$, and let $e_0 = (1,1,\ldots )$. If $(\al_n )_n \in c$, then $(\al_n)_n - (\lim_n \al_n ) e_0 \in c_0$, which implies that $c = \mb{R}e_0 \oplus c_0$. 

Define a mapping $S:c^*\ra\ell_1$ by
\[
S(f) = \left( f(e_0)-\sum_{n\geq 1} f(e_n),f(e_1),f(e_2),\ldots \right).
\]

Note that $S^{-1}((\be_n)_{n=0}^\iy)((\al_n)_{n=0}^\iy)=\sum_{n=0}^\iy\al_n\be_n$,
for each $(\al_n)_n \in c$ and each $(\be_n )_{n=0}^\infty \in \ell_1$, where $\al_0= \lim_n \al_n$. 

Define $f_m \in S_{c^*}$ by $f_m = S^{-1}(e_m)$ for each $m>1$. Then, we derive
\[
f_m ((\al_n)_{n\geq 0}) = \al_{m-1} \to \al_0 = (S^{-1}(1,0,\ldots ))((\al_n )_{n\geq 0})
\]
as $m \to \infty$ whenever $(\al_n)_{n\geq 0} \in c$, which means that $(f_m)_{m>1}$ converges in the $w^*(c)$ sense to $f = S^{-1}(e_1) \in S_{c^*}$. However, $(Sf_m)_{m>1} = (e_m)_{m>1}$ converges in the $w^*(c_0)$-sense to $0$, which is different from $Sf = e_1$. This concludes that $Sf_m\stackrel{w^*(c_0)}\nrightarrow Sf$ and consequently $Sf_m\stackrel{w}\nrightarrow Sf$. Finally, $f_m\stackrel{w}\nrightarrow f$, as $S$ is $w-w$ continuous.
\end{Rem}

\begin{Rem}\label{R2}
Now consider the sequence of subspaces $\hat{c}_0\ci \hat{c}\ci \ell_\iy$. Here $\hat{c}_0=\{(\al_n)_{n=0}^\iy:\al_0=0=\lim_n\al_n\}$. As discussed in section~1, the Banach space $\hat{c}$ that is canonical image of $c$ in $\ell_\iy$ and $\hat{c}_0$ is assumed to be a subspace of $\hat{c}$. Note that with this identification, an element $(\al_n)\in \hat{c}$ is also an element of $\hat{c}_0$ provided $\al_0=0$ and $\lim_n\al_n=0$. We observe that this $\hat{c}_0$ is an $M$-ideal in $\ell_\iy$. One of the characterizations of $M$-ideals viz. the $3$-ball property may be useful to derive this. We refer the reader to p.18 of \cite{HWW} for this concept.
\end{Rem}

\subsection{The characterization}

\begin{thm}\label{T3}
Let $\Theta=\{(a_n)_{n=1}^\iy\in S_{\ell_1}: a_1=0\}$. Then $\Theta$ is the set of PC of $I:(B_{\ell_1},w^*(c))\ra (B_{\ell_1},w)$.
\end{thm}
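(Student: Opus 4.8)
The plan is to work throughout with the identification $c^*\cong\ell_1$ fixed in Proposition~\ref{P2}, under which the first coordinate $a_1$ is dual to the constant vector $e_0=(1,1,\dots)$ (equivalently, to the limit functional) and the coordinate $a_n$ for $n\ge 2$ is dual to the basis vector $e_{n-1}\in c_0$; thus the duality is the twisted one $\langle a,x\rangle = a_1\lim_n x_n + \sum_{n\ge 1}a_{n+1}x_n$ for $x=(x_n)_n\in c$. Denote by $\mathrm{PC}$ the set of points of continuity of $I$. Since $c$ is separable, $(B_{\ell_1},w^*(c))$ is metrizable, so $a\in\mathrm{PC}$ precisely when every sequence $(a^{(m)})_m\ci B_{\ell_1}$ with $a^{(m)}\ra a$ in $w^*(c)$ also satisfies $a^{(m)}\ra a$ weakly. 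First I would record the coordinate description of $w^*(c)$-convergence on the bounded set $B_{\ell_1}$: testing against the total family $\{e_0\}\cup\{e_j:j\ge 1\}$ shows that $a^{(m)}\ra a$ in $w^*(c)$ iff (i) $a^{(m)}_k\ra a_k$ for every $k\ge 2$ (coordinatewise convergence off the first coordinate), and (ii) $\sum_k a^{(m)}_k\ra\sum_k a_k$ (convergence of the total sum, from the test vector $e_0$). Crucially, (i)--(ii) do not force $a^{(m)}_1\ra a_1$, and this gap is exactly what the condition $a_1=0$ closes.

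For the inclusion $\Theta\ci\mathrm{PC}$, suppose $a\in\Theta$, so that $\|a\|_1=\sum_{k\ge 2}|a_k|=1$, and let $a^{(m)}\ra a$ in $w^*(c)$ with $a^{(m)}\in B_{\ell_1}$. By lower semicontinuity of the $\ell_1$-norm under coordinatewise convergence, $\liminf_m\sum_{k\ge 2}|a^{(m)}_k|\ge\sum_{k\ge 2}|a_k|=1$; since $\sum_{k\ge 2}|a^{(m)}_k|\le\|a^{(m)}\|_1\le 1$, this forces $\sum_{k\ge 2}|a^{(m)}_k|\ra 1$ and hence $|a^{(m)}_1|\le 1-\sum_{k\ge 2}|a^{(m)}_k|\ra 0$. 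Thus $a^{(m)}_1\ra 0=a_1$, so $a^{(m)}\ra a$ in every coordinate and $\|a^{(m)}\|_1\ra 1=\|a\|_1$. A routine Kadets--Klee argument for $\ell_1$ (cut the tail at $N$ with $\sum_{k>N}|a_k|<\e$ and use $\sum_{k>N}|a^{(m)}_k|=\|a^{(m)}\|_1-\sum_{k\le N}|a^{(m)}_k|\ra\sum_{k>N}|a_k|$) then upgrades coordinatewise plus norm convergence to $\|a^{(m)}-a\|_1\ra 0$, which implies weak convergence. Hence every point of $\Theta$ is a point of continuity.

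For the reverse inclusion I would show that each $a\in B_{\ell_1}\sm\Theta$ fails to be a point of continuity by producing a sequence in $B_{\ell_1}$ converging to $a$ in $w^*(c)$ but not weakly; here $a\notin\Theta$ means $a_1\neq 0$ or $\|a\|_1<1$. If $a_1\neq 0$, set $b^{(m)}=a-a_1e_1+a_1e_m$ for $m\ge 2$. The moving bump $a_1e_m$ leaves every fixed coordinate $k\ge 2$ eventually untouched and preserves the total sum ($-a_1+a_1=0$), so (i)--(ii) give $b^{(m)}\ra a$ in $w^*(c)$, and $\|b^{(m)}\|_1\le\|a\|_1\le 1$; testing against $\vp=\mathrm{sign}(a_1)\,(0,1,1,1,\dots)\in\ell_\iy$ in the \emph{standard} $\ell_1$--$\ell_\iy$ pairing gives $\langle b^{(m)}-a,\vp\rangle=|a_1|\neq 0$ for all $m$, so $b^{(m)}\not\ra a$ weakly. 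If instead $\|a\|_1<1$, put $\de=(1-\|a\|_1)/2>0$ and use the zero-sum dipole $b^{(m)}=a+\de(e_m-e_{m+1})$; again (i)--(ii) give $w^*(c)$-convergence and $\|b^{(m)}\|_1\le\|a\|_1+2\de=1$, while testing against the alternating $\vp=(1,-1,1,-1,\dots)\in\ell_\iy$ gives $\langle b^{(m)}-a,\vp\rangle=\pm 2\de$, which does not tend to $0$. In both cases $a\notin\mathrm{PC}$, completing $\mathrm{PC}=\Theta$.

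The main obstacle I anticipate is bookkeeping rather than conceptual: one must keep the two pairings strictly apart, namely the twisted pairing defining $w^*(c)$ versus the standard pairing with $\ell_\iy$ defining the weak topology, since the perturbations in the reverse direction are engineered to be invisible to the former but visible to the latter. The only genuinely substantive step is the forward direction, where the norm-one normalization is essential: it is the combined use of lower semicontinuity (to kill the first coordinate) together with the Kadets--Klee property of $\ell_1$ that promotes the weak $w^*(c)$-data into honest norm convergence, and this is exactly where $a\in S_{\ell_1}$, rather than merely $a\in B_{\ell_1}$, is used.
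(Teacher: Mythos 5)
Your proof is correct, and while your treatment of the points with $a_1\neq 0$ uses essentially the same perturbation as the paper (your $b^{(m)}=a-a_1e_1+a_1e_m$ is exactly the sequence $S^{-1}(f_m+g)$ in the paper's proof, only you detect the failure of weak convergence by testing against an explicit element of $\ell_\infty$ rather than by invoking Theorem~\ref{criterion}), your argument for the inclusion $\Theta\subseteq\mathrm{PC}$ is genuinely different. The paper proves this inclusion structurally: it identifies $c_0=\ker(e_1)$ as an $M$-ideal in $\ell_\infty=c^{**}$, notes that elements of $\Theta$ restricted to $c_0$ are norm-preserved, and deduces uniqueness of the Hahn--Banach extension to $c^{**}$, implicitly relying on the known equivalence between unique extension and $w^*$-$w$ continuity at a point. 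You instead give a self-contained coordinate argument: metrizability of $(B_{\ell_1},w^*(c))$ reduces everything to sequences, the test vectors $e_0,e_1,e_2,\dots$ translate $w^*(c)$-convergence into coordinatewise convergence off the first coordinate plus convergence of the total sum, and then Fatou forces $a_1^{(m)}\to 0$ and the Kadets--Klee argument upgrades to norm convergence. Your route buys elementarity (no $M$-ideal theory, no appeal to the extension/continuity dictionary) and it also covers a case the paper's proof silently omits: the paper only establishes that no point of $S_{\ell_1}\setminus\Theta$ is a point of continuity, whereas the theorem as stated concerns all of $B_{\ell_1}$, and your zero-sum dipole $a+\delta(e_m-e_{m+1})$ disposes of the interior points. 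What the paper's approach buys in exchange is that the same $M$-ideal mechanism generalizes beyond $\ell_1$ and connects the result to the unique-extension theme of the rest of the article.
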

\begin{proof}
Consider the sequence of subspaces as stated in Remark~\ref{R2}, $\hat{c}_0\ci \hat{c}\ci \ell_\iy$. As discussed above $\ell_\iy\cong (\hat{c})^{**}$. Note that with the identification aforementioned, in Remark~\ref{R2}, $\hat{c}_0$ is an $M$-ideal in $\ell_\iy$ vis-\'{a}-vis any $(a_n)\in\Theta$, $(a_n)|_{\hat{c}_0}\in (\hat{c}_0)^*$ has a unique extension to $\ell_\iy$. Clearly $\|(a_n)\|_1=\|(a_n)|_{\hat{c}_0}\|_1$ and consequently $(a_n)$ has a unique extension to $(\hat{c})^{**}$. This proves $\Theta\ci \{(a_n)\in S_{\ell_1}:(a_n) \mbox{~is a PC of~}I\}$.

For the converse, we first observe that if $\xi\neq 0$ is real and considering $(\theta_n)$ as a sequence in $\ell_1$, where $\theta_n=(0,0,\ldots,\xi,0,\ldots)$, $\xi$ appears at the $n$-th coordinate, then $\lim_n\theta_n= \theta_1$ with respect to the topology induced by $\hat{c}$. On the other hand, $\lim_n\theta_n=0$ with respect to the topology induced by $\hat{c}_0$.

We precisely use the notations used in Remark~\ref{R1} for this part. Now assume that $\be=(a_n)\notin \Theta$, that means $a_1\neq 0$. 
Note that from Theorem~\ref{T8} it suffices to prove that $S: c^*\to \ell_1$ is not $w^*(c)-w^*(c_0)$ continuous at $S^{-1}(\be)$.

Let $f=S^{-1}\big((a_1,0,0,...)\big)\text{ and } g= S^{-1}\big((0,a_2,a_3,...,a_n,...)\big)$. Note that $\be=Sf+Sg$. 

Also, let $f_m=S^{-1}\big(a_1(e_m)\big)$ for $m\in\mb{N}$. 
Clearly, as stated above, $f_m\stackrel{w^*(c)}\longrightarrow f$ and therefore $f_m+g\stackrel{w^*(c)}\longrightarrow f+g$.

Also, as stated above, $S(f_m)\stackrel{w^*(c_0)}\longrightarrow 0$. Hence $S(f_m+g)\stackrel{w^*(c_0)}\longrightarrow Sg\neq S(f+g)$. Whence $S(f_m+g)\stackrel{w^*(c_0)}\nrightarrow S(f+g)$.

This proves $\Theta\supseteq \{(a_n)\in S_{\ell_1}:(a_n) \mbox{~is a PC of~}I\}$ and this completes the proof.
\end{proof}

\section{Existence of non Hahn-Banach smooth predual}

\begin{thm}\label{T1}
Let $X$ be a nonreflexive $M$-embedded space. Then there exists a predual $Y$ of $X^*$ such that $Y$ is not Hahn-Banach smooth.
\end{thm}
\begin{proof}
We follow the construction as stated in the proof of \cite[Proposition III.2.10]{HWW}.
Since $X$ is an $M$-ideal in $X^{**}$, we have $X^{***}=X^*\oplus_1X^\perp$.
Consider $X^\perp$ as the dual space of $X^{**}/X$ and choose $y^{***}\in X^\perp,\|y^{***}\|=1$. We choose $y^{***}\in NA_1(X^{**}/X)$ where $y^{***}(\Phi)=1$ for some $\Phi\in X^{**}/X$, $\|\Phi\|=1$ and a $w^*$-closed  hyperplane $H=\ker (\Phi)$ in $X^\perp$. It is clear that, $$1\leq \|y^{***}+h^{***}\|\text{ for all }h^{***}\in H.$$

Hence, for any $p^*\in S_{X^*}$, $Z:=H\oplus span\{y^{***}+p^*\}$ is $w^*$-closed in $X^{***}$. We choose $p^*\in NA_1(X)$, in particular.

Let $P$ be the projection onto $X^*$ associated to the decomposition $$X^{***}=X^*\oplus Z.$$
For $x^*+z^{***}\in X^*\oplus Z$, we have 
         \beqa
x^*+z^{***}=x^*+[h^{***}+\la (y^{***}+p^*)]=(x^*+\la p^*)+(h^{***}+\la y^{***})\in X^*\oplus_1 X^\perp.\eeqa
Since $\|\la p^*\|=|\la|\leq \|h^{***}+\la y^{***}\|$ for all $\la\in\mb{R}$, $h^{***}\in H$, we get 
         $$\begin{aligned}
             \|P(x^*+z^{***})\|&=\|x^*\|\\
             &\leq \|x^*+\la p^*\|+\|\la p^*\|\\
             &\leq \|x^*+\la p^*\|+\|h^{***}+\la y^{***}\|\\
             &=\|x^*+z^{***}\|.
\end{aligned}$$
So $P$ is a contractive projection onto $X^*$ with $w^*$-closed kernel $Z$ and is different from $\pi_{X^*}$, since $Z\neq X^\perp.$ Also as we have $Z$ is a $w^*$-closed subspace of $X^{***}$, there exists $Y\subseteq X^{**}$ such that $Y^\perp=Z$. Moreover $Y^*\cong (X^{**})^*/Y^\perp=X^{***}/Z\cong X^*$ and $Y^{***}\cong X^{***}=X^*\oplus Y^\perp\cong Y^*\oplus Y^\perp$. As $Z\neq X^\perp$, we get $Y\ncong X$. In particular, we have $Y={}^\perp Z$, where ${}^\perp Z=\{x^{**}\in X^{**}:z^{***}(x^{**})=0\text{ for all }z^{***}\in Z\}$.

{\sc Claim: } $Y$ is not weakly Hahn-Banach smooth. 

To this end, we note that $-p^*\in S_{X^*}=S_{Y^*}$ but $-p^*$ has two distinct extensions over $Y^{**}$. Clearly $-p^*+0\in Y^{***}$ is an extension. Let $z^{***}=y^{***}+p^*$, then clearly $-p^*+(y^{***}+p^*)$ is an extension of $-p^*$, since $z^{***}\in Y^\perp$. Also $\|-p^*+z^{***}\|=\|\big(p^*+(-p^*)\big)+y^{***}\|=\|y^{***}\|=1$. So $-p^*,-p^*+z^{***}$ are two distinct norm preserving extensions of $-p^*$ over $Y^{**}$. Hence, the result follows.
\end{proof}

\begin{Cor}
Let $X$ be an $L_1$-predual which is Hahn-Banach smooth then $X^*$ has a predual which is not weakly Hahn-Banach smooth.
\end{Cor}
\begin{proof}
Follows from Theorem~\ref{T7} and Theorem~\ref{T1}.
\end{proof}

\section{Remarks on the PC in function spaces}
\subsection{Importance of extreme points in the context of unique extensions}

\begin{Pro}\label{P1}
Let $X$ be a Banach space where no point in $ext (B_{X^*})$ is a PC of $I:(B_{X^*},w^*)\rightarrow (B_{X^*},w)$. Then no $M$-ideal in $X$ can be weakly Hahn-Banach smooth.
\end{Pro}
\begin{proof}
Let $J$ be an $M$-ideal in $X$. The result clearly follows from the following observations.
\bla
\item Let $y^*\in B_{J^*}$ be norm-attaining. Then there exists $y_0\in S_J$ such that $y^*(y_0)=1$. As $H=\{y^*\in B_{J^*}: y^*(y_0)=1\}$ is a face in $B_{J^*}$, there exists an extreme point $y_1^*\in B_{J^*}$ such that $y_1^*(y_0)=1$. 
\item An extreme point of $B_{J^*}$ is an extreme point of $B_{X^*}$, if $J$ is an $M$-ideal in $X$.
\item A PC of $y^*\in S_{J^*}$ is also a PC of $S_{X^*}$ (\cite[Theorem~6.2]{DMR}), if $J$ is an $M$-ideal in $X$.
\el
\end{proof}

In the following we use the techniques used in \cite[Theorem~3.15]{BR}.

\begin{Pro}
Let $L$ be a locally compact Hausdorff space and $X=C_0(L)$. Then the identity mapping $I:(B_{X^*},w^*)\to (B_{X^*},w)$ is continuous on $\mu\in S_{X^*}$ if and only if $\mu=\sum_{n=1}^\iy \al_n\epsilon_n\de_{t_n}$, $(\al_n)\ci (0,1]$, $\epsilon_n=\pm 1$, with $\sum_{n=1}^\iy \al_n=1$ and $t_n\in  L$ are isolated points of $L$. 
\end{Pro}

\begin{proof}
First note that $B_{X^*}=\overline{conv}^{w^*}\{\pm \de_k:k\in L\}$ and hence $\mu\in \overline{conv}^{w^*}\{\pm \de_k:k\in L\}$. Since $I$ is continuous on $\mu$, hence $\mu\in \overline{conv}^{w}\{\pm \de_k:k\in L\}=\overline{conv}^{\|.\|}\{\pm \de_k:k\in L\}$. This implies supp$(\mu)$ can atmost be countable. Let supp$(\mu)=\{t_n:n\in\mb{N}\}$. Since $\|\mu\|=1$, we have $\sum_{n=1}^\iy|\mu(\{t_n\})|=1$. Let $\al_n=\epsilon_n\mu(\{t_n\})$, where $\epsilon_n=sgn (\mu(\{t_n\}))$. Then it is easy to check that $\mu=\sum_{n=1}^\iy \epsilon_n\al_n\de_{t_n}$.

{\sc Claim~:} $t_n$ are isolated points of $L$ for all $n\in\mb{N}$. 

If possible, let $n_0\in\mb{N}$ exist such that $t_{n_0}$ is not an isolated point of $L$. We will see that this contradicts $\al_{n_0}=|\mu(\{t_{n_0}\})|>0$. As $t_{n_0}$ is not an isolated point of $L$, we can find a net $(k_\al)_{\al\in\Ga}\subseteq L$ such as $k_\al\to t_{n_0}$ and $k_\al\neq t_{n_0}$ for any $\al$. This follows that $\de_{k_\al}\stackrel{w^*}\longrightarrow \de_{t_{n_0}}$. Now, define a net $\mu_\al$ in $S_{X^*}$ by 
 $$\mu_{\al}=\sum_{n\neq n_0}\al_n\de_{t_n}+\al_{n_0}\de_{k_\al}.$$

It is clear that $\mu_\al\stackrel{w^*}\longrightarrow \mu$. 

Since $\mu$ is a PC of $I$, $\mu_\al\stackrel{w}\longrightarrow \mu$. This means that $x^{**}(\mu_\al)\to x^{**}(\mu)$ for all $x^{**}\in X^{**}$.

It is well known that $X^*=M(L)$, the collection of all regular Borel measures on $L$, with the total variation norm. Now for $t\in L$, we define $\chi_t: M(L)\to \mb{R}$ by $\chi_t(\mu)=\mu(t)$ for all $\mu\in M(L)$. It is easy to check that  $\chi_t$ is linear. Also, we have $|\chi_t(\mu)|=|\mu(t)|\leq\|\mu\|$, hence $\chi_t\in X^{**}$ for all $t\in L$.

As we have $\mu_\al\stackrel{w}\longrightarrow\mu$, we have $\chi_{t_{n_0}}(\mu_\al)\to \chi_{t_{n_0}}(\mu)$. This implies $\al_{n_0}\de_{k_\al}(t_{n_0})\to \al_{n_0}\de_{t_{n_0}}(t_{n_0})=\al_{n_0}$. But $\de_{k_\al}(t_{n_0})=0$ since $k_\al\neq t_{n_0}$. Hence $\al_{n_0}=0$, a contradiction.

Conversely, assume that $\mu=\sum_{n=1}^\iy \al_n\epsilon_n\de_{t_n}$, $0<\al_n\leq 1$ and $(t_n)_n$ are isolated points of $L$. We have to show that $I$ is continuous on $\mu$. Let $L'$ be the collection of all isolated points of $L$. Clearly, $L'$ is open in $L$, and the subspace topology on $L'$ is the discrete topology.

Now let $D=L\setminus L'$, a closed subset of $L$ and hence $J_D=\{f\in C_0(L): f(t)=0\text{ for all }t\in D\}$ is an $M$-ideal in $C_0(L)$. It is clear that $C_0(L')\cong J_D$. 

Now observe that $C_0(L')$ is Hahn-Banach smooth as $L'$ is discrete and $\mu\in C_0(L')^*$ with $\|\mu\|=1$. Therefore, $\mu$ is a PC of $I:(B_{C_0(L)^*},w^*)\to (B_{C_0(L)^*},w)$.

 Again, $C_0(L')$ is an $M$-ideal in $C_0(L)$, and hence $\mu\in C_0(L)^*$ is a PC of $I:(B_{X^*},w^*)\to (B_{X^*},w)$. Hence, the result follows.
 \end{proof}
\begin{Cor}\label{C4}
Let $L$ be a locally compact Hausdorff space and $X=C_0(L)$. Also, let $I:(B_{X^*},w^*)\to (B_{X^*},w)$ be the identity map. Then
    \bla 
    \item $I$ is continuous at $\de_t\in S_{X^*}$ if and only if $t$ is an isolated point of $L$.
    \item If $I$ is continuous on some $x^*\in S_{X^*}$, then $I$ is continuous at $\de_t$ for some $t\in L$.
    \el 
\end{Cor}


\subsection{Results on spaces of affine functions}

We refer the reader to the Appendix of this article, where a brief review to the convexity theory is given.
Recall that the {\it complementary set} of a face $F$ is the union of all faces that are disjoint from $F$, often denoted by $F'$ (see \cite{EM}).

\bdfn
Let $S$ be a compact convex set, and let $F\ci S$ be a face. $F$ is said to be a {\it split face} if the complementary set $F'$ is convex and for any $s\in S\sm (F\cup F')$, $s$ has a unique representation $s=\la p+(1-\la)q$, where $0<\la<1$, $p\in F, q\in F'$.
\edfn

We need the following lemma to derive our next result.

\begin{lemma}\label{L1}\cite{BPR}
Let $S$ be a compact convex set such that each point of $ext (S)$ is split. If $F$ is a closed split face of $S$ and $F'$ its complementary face and $t_1,t_2,\ldots,t_n\in ext (S)\sm ext (F)$ then there exists $b\in A(S)$ such that $b|_F=0$ and $b(t_j)=1$, $1\leq j\leq n$.
\end{lemma}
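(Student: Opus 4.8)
The plan is to convert the simultaneous interpolation into a single surjectivity statement about the ideal $J_F:=\{a\in A(S):a|_F=0\}$, and then to read off the required function. First I would record the standard facial fact that for a split face $F$ with complementary face $F'$ one has $ext(S)=ext(F)\sqcup ext(F')$: if $e\in ext(S)$ is written, via the direct convex sum $S=F\oplus F'$, as $e=\la x+(1-\la)x'$ with $x\in F$, $x'\in F'$ and $\la\in[0,1]$, then since $F\cap F'=\es$ we have $x\neq x'$, so extremality of $e$ forces $\la\in\{0,1\}$, i.e. $e\in F$ or $e\in F'$. As $t_1,\dots,t_n\in ext(S)\sm ext(F)$, each $t_j$ lies in $F'$; in particular the $t_j$ are pairwise distinct and disjoint from the closed set $F$. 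I would then reduce the whole assertion to showing that the evaluation map $a\mapsto(a(t_1),\dots,a(t_n))$ carries $J_F$ onto $\mb{R}^n$: granting this, any $b\in J_F$ mapping to $(1,\dots,1)$ satisfies $b|_F=0$ and $b(t_j)=1$, as required.

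The surjectivity is where the split structure enters, and I would extract it in the dual. Since $F$ is a \emph{closed} split face, $J_F$ is an $M$-ideal of $A(S)$ by the Alfsen--Effros correspondence between closed split faces and $M$-ideals in $A(S)$ (see \cite{HWW}); consequently there is an $L$-decomposition $A(S)^*=J_F^{\perp}\oplus_{\ell_1}N$ with complementary $L$-summand $N$. The geometric content of the decomposition $S=F\oplus F'$ is that the $L$-projection onto $J_F^{\perp}$ reflects only the behaviour of functions on $F$, so a point evaluation at a point lying off $F$ has vanishing $J_F^{\perp}$-component. Since each $t_j\in F'\sm F$, this gives $\de_{t_j}\in N$ for every $j$.

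Now suppose $(c_1,\dots,c_n)\in\mb{R}^n$ satisfies $\sum_j c_j\,a(t_j)=0$ for all $a\in J_F$; this says exactly that $\sum_j c_j\de_{t_j}\in J_F^{\perp}$. On the other hand, by the previous paragraph $\sum_j c_j\de_{t_j}\in N$. As $J_F^{\perp}\cap N=\{0\}$, the functional $\sum_j c_j\de_{t_j}$ vanishes on all of $A(S)$. Because $A(S)$ separates the points of $S$ and contains the constants, a vanishing finite combination of point evaluations at the distinct points $t_1,\dots,t_n$ must have all coefficients zero, so $c_1=\dots=c_n=0$. Hence the evaluation map $J_F\to\mb{R}^n$ has trivial annihilator and is therefore onto, which by the reduction above produces the desired $b$.

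The step I expect to be the main obstacle is the structural input that evaluations at points of $F'$ fall in the $N$-summand. The transparent candidate for $b$ is the affine coordinate $\rho$ of the decomposition $S=F\oplus F'$ (with $\rho|_F=0$ and $\rho|_{F'}=1$, whence $\rho(t_j)=1$); this $\rho$ is affine but in general only semicontinuous, so it cannot serve as $b$ directly, and it is precisely the $M$-ideal/$L$-summand formalism that bypasses this failure of continuity. The hypothesis that \emph{every} point of $ext(S)$ is split would enter a more hands-on variant, where one builds for each $t_j$ the coordinate function of the split point $t_j$ (which vanishes on $ext(F)$, hence on $F$ by Krein--Milman, and realizes the Kronecker values at the $t_i$) and then upgrades it to a continuous affine function; I would keep this route in reserve. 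The remaining verifications --- that $J_F$ is an $M$-ideal and that the $L$-projection onto $J_F^{\perp}$ suppresses mass off $F$ --- are standard consequences of the closed split face structure, and are where I would lean on the convexity-theoretic machinery of \cite{EM} and \cite{AE}.
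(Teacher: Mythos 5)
The paper offers no proof of this lemma at all --- it is quoted from \cite{BPR} --- so there is nothing internal to compare against; the classical argument (presumably the one in \cite{BPR}) goes through the Alfsen--Effros facial calculus, using that a finite union of closed split faces is again a closed split face and that continuous affine functions on a closed split face extend to $A(S)$. Your route through the $M$-ideal $J_F=\{a\in A(S):a|_F=0\}$ and the $L$-decomposition $A(S)^*=J_F^\perp\oplus_{\ell_1}N$ is a genuine alternative, and the two structural inputs you isolate --- that a closed split face yields an $M$-ideal, and that the associated $L$-projection annihilates $\de_t$ for $t\in F'$ --- are exactly the standard content of that correspondence, correctly flagged as the external load-bearing facts. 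The reduction to the annihilator of the evaluation map and the conclusion $\sum_j c_j\de_{t_j}\in J_F^\perp\cap N=\{0\}$ are sound.

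The one step that does not survive scrutiny as written is the last one: from $\sum_j c_j\de_{t_j}=0$ in $A(S)^*$ you infer $c_1=\dots=c_n=0$ ``because $A(S)$ separates the points of $S$ and contains the constants.'' Separation of points only makes the $\de_{t_j}$ pairwise distinct, not linearly independent over $A(S)$: on the square one has $\de_{(0,0)}+\de_{(1,1)}-\de_{(1,0)}-\de_{(0,1)}=0$ against every affine function, even though the four vertices are distinct extreme points. (Relatedly, ``each $t_j$ lies in $F'$'' does not make the $t_j$ pairwise distinct; that should simply be a harmless reduction.) Two repairs are available. Either invoke the hypothesis --- so far unused in your main line --- that every point of $ext (S)$ is split: each $\mb{R}\de_{t_j}$ is then an $L$-summand whose complementary summand contains the other $\de_{t_i}$, and applying the corresponding $L$-projections to the identity $\sum_j c_j\de_{t_j}=0$ extracts $c_j=0$ one index at a time. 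Or, more economically, observe that full surjectivity was never needed: to place $(1,\dots,1)$ in the range of the evaluation map it suffices that every $(c_j)$ annihilating that range satisfies $\sum_j c_j=0$, and this follows at once by testing $\sum_j c_j\de_{t_j}=0$ against the constant function $1\in A(S)$. With either repair the proof is complete.
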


Recall that if $S$ is a Choquet simplex, then each face of $S$ is a split face, and hence so is $\{p\}$ if $p\in ext(S)$(see \cite[p.133]{AE}). 

\begin{Pro}\label{E4}
Let $S$ be a compact convex set in a locally convex space $E$ where each $p\in ext (S)$ is a limit point of $ext (S)$ and $\{p\}$ is a split face of $S$. Then no subspace of $A(S)$ can have property-$(wU)$ in $A(S)^{**}$.
\end{Pro}
\begin{proof}
 Without loss of generality, assume that  $Y$ is a subspace of $A(S)$ that has the property-$(wU)$ in $A(S)$. First choose $f\in Y$ and $t\in ext (S)$ so that $\|f\|_{\iy}=|f(t)|=1$. Then clearly $\al\de_t\in Y^*$ and $\al\de_t\in A(S)^*$ is the unique linear extension of itself because $Y$ has property-$(wU)$ in $A(S)$. Here $\al=sgn (f(t))$.

Let $X=A(S)$. Suppose that $Y$ has property-$(wU)$ in $A(S)^{**}$. This implies $\de_t$ is a PC of the identity map $I:(S_{X^*},w^*)\to (S_{X^*},w)$. Let $(t_\al)\subseteq ext (S)$ such that $t_\al\to t$ and $t_\al\neq t$ for all $\al$. Hence, we have  $\de_{t_\al}\stackrel{w^*}\longrightarrow\de_{t}$ and this implies $\de_{t_\al}\stackrel{w}\longrightarrow\de_{t}$. Thus, we have $\de_t\in \overline{conv}^{w}\{\de_{t_\al}:\al\in\Ga\}$ which implies $\de_t\in \overline{conv}^{\|.\|}\{\de_{t_\al}:\al\in\Ga\}$. Therefore, there exists $(\mu_\be)\subseteq conv\{\de_\al:\al\in\Ga\}$ such that $\mu_\be\rightarrow\de_t$ in the norm, and this implies that for $0<\e<1$ there exists $\be$ such that $\|\mu_{\be}-\de_t\|<\e$, say $\mu_{\be}=\sum\limits_{i=1}^k\la_i\de_{t_{\al_i}}$ for some $\la_i\in [0,1],\sum\limits_{i=1}^k\la_i=1$ and $t_{\al_i}\in ext (S)$. But as $t_{\al_i}\neq t$, from Lemma~\ref{L1} one can get $b\in A(S)$ such that $b(t_{\al_i})=1$, $1\leq i\leq k$ and $b(t)=0$. Hence, we have $\|\sum\limits_{i=1}^k\la_i\de_{t_{\al_i}}-\de_t\|\geq 1$, a contradiction.
\end{proof}

Note that for a compact convex set $S$ in a locally convex space $E$, $B_{A(S)^*}=conv \{S\cup -S\}$. Hence $ext (B_{A(S)^*})=\{\pm\de_t:t\in S\}$.

\begin{Cor}\label{C3}
Let $S$ be a Choquet simplex as stated in Proposition~\ref{E4}. Then no point in $ext (B_{A(S)^*})$ is a PC of $I:(B_{A(S)^*},w^*)\ra (B_{A(S)^*},w)$.
\end{Cor}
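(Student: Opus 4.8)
The plan is to derive Corollary~\ref{C3} directly from Example~\ref{E4} by combining the structural description of $ext(B_{A(S)^*})$ with the fact, already recorded in the excerpt, that $\{\pm\de_t:t\in S\}$ exhausts the extreme points of $B_{A(S)^*}$. Since $S$ is a Choquet simplex, every $\{p\}$ with $p\in ext(S)$ is a split face (as noted after Lemma~\ref{L1}), so the hypotheses of Example~\ref{E4} are met, and that example tells us no subspace of $A(S)$ retains property-$(wU)$ in $A(S)^{**}$. First I would translate the failure of property-$(wU)$ into a statement about points of continuity of the identity map: taking the subspace $Y=A(S)$ itself (the canonical image in $A(S)^{**}$), the argument inside the proof of Example~\ref{E4} shows that each $\de_t$ with $t\in ext(S)$ fails to be a point of continuity of $I:(S_{X^*},w^*)\ra(S_{X^*},w)$, where $X=A(S)$.

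Next I would handle the sign and the passage from the sphere $S_{X^*}$ to the ball $B_{X^*}$. The extreme points of $B_{A(S)^*}$ are exactly $\pm\de_t$ for $t\in S$, and among these the ones lying over $ext(S)$ are the relevant candidates; a point $\pm\de_t$ with $t\notin ext(S)$ is not extreme in $B_{A(S)^*}$ at all, so it need not be considered. For $t\in ext(S)$, the same net $(t_\al)\ci ext(S)$ with $t_\al\to t$ and $t_\al\neq t$ produces $\de_{t_\al}\stackrel{w^*}\longrightarrow\de_t$, and the core computation in Example~\ref{E4}---using Lemma~\ref{L1} to manufacture $b\in A(S)$ separating $t$ from any finite subfamily of the $t_\al$---shows $\de_{t_\al}$ does not converge weakly to $\de_t$. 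Hence $\de_t$ is not a $w^*\mathchar`-w$ point of continuity on $B_{X^*}$. The sign case $-\de_t$ follows by applying the map $x^*\mapsto -x^*$, which is both a $w^*$- and $w$-homeomorphism of $B_{X^*}$, so it carries points of continuity to points of continuity; thus $-\de_t$ fails as well.

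I would then assemble these observations: every extreme point of $B_{A(S)^*}$ is of the form $\pm\de_t$ with $t\in S$, and if it is genuinely extreme then $t\in ext(S)$, in which case both the preceding paragraph and the symmetry argument rule it out as a point of continuity. This yields precisely the conclusion that no point of $ext(B_{A(S)^*})$ is a point of continuity of $I:(B_{A(S)^*},w^*)\ra(B_{A(S)^*},w)$.

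The main obstacle I anticipate is the bookkeeping needed to move cleanly between three slightly different arenas---the sphere versus the ball, and the signed versus unsigned evaluation functionals---without re-proving the separation argument from scratch. The genuinely delicate point is confirming that the net $(\de_{t_\al})$ remains admissible for continuity at $\de_t$ inside $B_{X^*}$ (it does, since each $\de_{t_\al}\in S_{X^*}\ci B_{X^*}$), and that the noncontinuity witnessed in Example~\ref{E4} is exactly the obstruction to $\de_t$ being a $w^*\mathchar`-w$ point of continuity; once the symmetry reduction for $-\de_t$ is in place, the remainder is a direct citation of the separation mechanism already established via Lemma~\ref{L1}.
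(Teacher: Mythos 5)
Your proposal is correct and follows essentially the same route as the paper, which states Corollary~\ref{C3} as an immediate consequence of Example~\ref{E4} together with the preceding identification $ext(B_{A(S)^*})\subseteq\{\pm\de_t:t\in ext(S)\}$; your added bookkeeping (the symmetry $x^*\mapsto -x^*$ for the sign, and the observation that the witnessing net lives in $B_{X^*}$) just makes explicit what the paper leaves implicit. No gaps.
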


The following follows as a consequence of Proposition~\ref{P1} and Corollary~\ref{C3}.

\begin{Cor}\label{C2}
Let $S$ be a compact convex set as stated in Proposition~\ref{E4}. Then no $M$-ideal in $A(S)$ is weakly Hahn-Banach smooth.
\end{Cor}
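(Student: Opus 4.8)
The plan is to argue by contradiction and reduce to Example~\ref{E4}, which already rules out property-$(wU)$ in $A(S)^{**}$ for every subspace of $A(S)$. Write $X=A(S)$ and suppose, towards a contradiction, that $Y$ is an $M$-ideal in $X$ which is weakly Hahn-Banach smooth. First I would record the structural fact about $M$-ideals that drives the argument: the $L$-projection on $X^*$ with range $Y^\perp$ dualizes to an $M$-projection on $X^{**}$ whose range is $Y^{\perp\perp}$. Hence $Y^{\perp\perp}$ is an $M$-summand, and in particular an $M$-ideal, in $X^{**}$, so that $Y^{\perp\perp}$ enjoys property-$(U)$ in $X^{**}$ (see Chapter~I of \cite{HWW}).

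Next I would set up the tower of canonical embeddings. Let $i:Y\hookrightarrow X$ be the inclusion and let $J_Y,J_X$ denote the canonical embeddings into the respective biduals. The map $i^{**}:Y^{**}\to X^{**}$ is an isometric isomorphism onto $Y^{\perp\perp}$ and satisfies $i^{**}\circ J_Y=J_X\circ i$, so it carries $J_Y(Y)$ precisely onto $J_X(Y)$. Since weak Hahn-Banach smoothness of $Y$ means exactly that $J_Y(Y)$ has property-$(wU)$ in $Y^{**}$, and property-$(wU)$ is an isometric invariant, transporting along $i^{**}$ shows that $J_X(Y)$ has property-$(wU)$ in $Y^{\perp\perp}$. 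Inside $X^{**}$ we thus obtain the chain
\[
J_X(Y)\ \subseteq\ Y^{\perp\perp}\ \subseteq\ X^{**},
\]
in which the first inclusion carries property-$(wU)$ and the second carries property-$(U)$.

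It then remains to combine these by a short transitivity step: if $A\subseteq B\subseteq C$ with $A$ having property-$(wU)$ in $B$ and $B$ having property-$(U)$ in $C$, then $A$ has property-$(wU)$ in $C$. To see this, take a norm-attaining $a^*\in A^*$ and any two norm-preserving extensions $c_1^*,c_2^*\in C^*$; their restrictions to $B$ are norm-preserving extensions of $a^*$, hence coincide by property-$(wU)$ of $A$ in $B$, and calling this common restriction $b^*$ one sees that $c_1^*,c_2^*$ are norm-preserving extensions of $b^*$, so property-$(U)$ of $B$ in $C$ forces $c_1^*=c_2^*$. Applying this to the chain above gives that $J_X(Y)$ has property-$(wU)$ in $X^{**}$, i.e. $Y$ is a subspace of $A(S)$ with property-$(wU)$ in $A(S)^{**}$, contradicting Example~\ref{E4}. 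The only genuinely delicate point is the bookkeeping of the embeddings in the second paragraph: one must check that $i^{**}$ identifies $Y^{**}$ with the $M$-summand $Y^{\perp\perp}$ and matches the two canonical images $J_Y(Y)$ and $J_X(Y)$; once this compatibility is secured, the remaining steps are routine.
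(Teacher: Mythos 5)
Your proof is correct, but it takes a genuinely different route from the paper's. The paper argues through the point-of-continuity formulation: it uses the explicit description of $M$-ideals in $A(S)$ as $J=\{a\in A(S):a|_F=0\}$ for a closed split face $F$, invokes Lemma~\ref{L1} to produce $b\in J$ witnessing that $\de_p$ is norm-attaining on $J$ for some $p\in ext(F')$, and then transfers the resulting $w^*$-$w$ point of continuity from $B_{J^*}$ to $B_{A(S)^*}$ via \cite[Theorem~6.2]{DMR}, contradicting Corollary~\ref{C3}. You instead stay entirely at the level of extension properties: $Y^{\perp\perp}$ is an $M$-summand of $X^{**}$ and hence has property-$(U)$ there, $i^{**}$ matches the canonical copies so that $J_X(Y)$ has property-$(wU)$ in $Y^{\perp\perp}$, and your transitivity lemma (which is sound --- note that the intermediate functional $b^*$ is even norm-attaining on $B_B$, so $(wU)$ of $B$ in $C$ would already suffice) yields a direct contradiction with Example~\ref{E4}. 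Your route is self-contained --- it needs neither the split-face description of $M$-ideals in $A(S)$, nor Lemma~\ref{L1}, nor the external transfer theorem from \cite{DMR} --- and it in fact proves the general statement that a weakly Hahn-Banach smooth $M$-ideal in any Banach space $X$ has property-$(wU)$ in $X^{**}$ under the canonical embedding, which is close in spirit to the Proposition the paper records immediately after this corollary. One small bookkeeping slip: the adjoint $P^*$ of the $L$-projection $P$ of $X^*$ onto $Y^\perp$ has range $Z^\perp$ and kernel $Y^{\perp\perp}$; it is the complementary $M$-projection $I-P^*$ whose range is $Y^{\perp\perp}$. This does not affect the argument, since either way $Y^{\perp\perp}$ is an $M$-summand of $X^{**}$.
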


\subsection{Final remarks}

\begin{Cor}\label{accum}
Let $L$ be a locally compact Hausdorff space and $X=C_0(L)$. Then, the following are equivalent.
\bla
\item The identity mapping $I:(B_{X^*},w^*)\ra (B_{X^*},w)$ is continuous on $ext (B_{X^*})$.
\item $L$ is a discrete space.
\el
\end{Cor}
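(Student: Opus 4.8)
The plan is to prove the equivalence of the two statements in Corollary~\ref{accum} by relating the continuity of $I$ on $ext(B_{X^*})$ to the topological structure of $K$, using the preceding theorems on $C(K)$ spaces. The key fact throughout is that for $X = C_0(K)$, the extreme points of $B_{X^*}$ are precisely $\{\pm\delta_t : t \in K\}$, so the behavior of $I$ on $ext(B_{X^*})$ is governed entirely by the point-mass functionals $\delta_t$.

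First I would prove $(b) \Ra (a)$, which is the easier direction. If $K$ is discrete, then each $\delta_t$ is an isolated extreme point in the $w^*$-topology: a net $(s_\al) \subseteq K$ with $\delta_{s_\al} \os \delta_t$ (in $w^*$) must eventually satisfy $s_\al = t$, since the indicator function $\mathbf{1}_{\{t\}} \in C_0(K)$ separates $t$ from all other points. Hence any $w^*$-convergent net of extreme points to $\delta_t$ is eventually constant, and so trivially $w$-convergent. This shows $I$ is continuous at every point of $ext(B_{X^*})$.

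The main work is the contrapositive of $(a) \Ra (b)$: assuming $K$ is \emph{not} discrete, I would produce a point of $ext(B_{X^*})$ at which $I$ fails to be continuous. Since $K$ is not discrete, there exists a non-isolated point $t_0 \in K$, i.e., $t_0$ is a limit point of $K$. The strategy is to adapt the argument of Example~\ref{E4} (with $S = P(K^+)$ or the relevant compactification, and $A(S) \cong C(K)$) to this setting: one finds a net $(t_\al) \subseteq K \sm \{t_0\}$ with $t_\al \to t_0$, so that $\delta_{t_\al} \os \delta_{t_0}$ in $w^*$. If $\delta_{t_0}$ were a point of $w^*$-$w$ continuity, then $\delta_{t_0}$ would lie in the norm-closed convex hull of $\{\delta_{t_\al}\}$, and the separation argument via a function $b \in C_0(K)$ with $b(t_0) = 0$ and $b(t_\al) = 1$ on a finite subcollection yields the estimate $\|\sum \la_i \delta_{t_{\al_i}} - \delta_{t_0}\| \geq 1$, contradicting norm approximation. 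This exhibits the desired failure of continuity at $\delta_{t_0} \in ext(B_{X^*})$.

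The hard part will be handling the locally compact (non-compact) case cleanly, since the earlier results (Theorem~\ref{T1}, Example~\ref{E4}) are stated for compact convex sets and $C(K)$ with $K$ compact. To transfer the perfect/limit-point argument to $C_0(K)$, I would pass to the one-point compactification $K^+ = K \cup \{\iy\}$ so that $C_0(K)$ embeds as $\{f \in C(K^+) : f(\iy) = 0\}$, which is precisely the $M$-ideal $\ker(\delta_\iy)$ in $C(K^+)$; the extreme points of $B_{C_0(K)^*}$ then correspond to $\{\pm\delta_t : t \in K\}$ and the separating-function construction goes through with functions vanishing at $\iy$. The only subtlety is ensuring the limit point $t_0$ and the approximating net $(t_\al)$ stay inside $K$ (not at $\iy$), which holds because $t_0$ is a genuine non-isolated point of the locally compact space $K$ and admits an approximating net within any compact neighborhood. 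Assembling both directions then gives the equivalence.
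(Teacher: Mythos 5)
Your $(a)\Ra(b)$ direction (argued in contrapositive) is essentially the paper's own: a non-isolated point $t_0\in K$ yields a net $\de_{t_\al}\stackrel{w^*}{\lgra}\de_{t_0}$ with $t_\al\neq t_0$, and the separation argument of Example~\ref{E4} (a function vanishing at $t_0$ and equal to $1$ at finitely many of the $t_{\al_i}$) forces $\|\sum_i\la_i\de_{t_{\al_i}}-\de_{t_0}\|\geq 1$, so $\de_{t_0}$ is not in the norm-closed convex hull of the net and cannot be a point of continuity. Your explicit passage to the one-point compactification, realizing $C_0(K)$ as the $M$-ideal $\ker(\de_\iy)$ in $C(K^+)$ so that the compact-case machinery applies, is a detail the paper silently omits and is a genuine improvement in care.

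The direction $(b)\Ra(a)$ is where you diverge from the paper, and where there is a gap. The paper argues that for discrete $K$ the space $C_0(K)=c_0(K)$ is Hahn--Banach smooth, so by Theorem~\ref{T5} the weak and $w^*$ topologies coincide on all of $S_{X^*}\supseteq ext(B_{X^*})$. You instead show that $\chi_{\{t\}}\in C_0(K)$ makes each $\pm\de_t$ $w^*$-isolated \emph{within the set of extreme points}, so that any $w^*$-convergent net \emph{of extreme points} is eventually constant. That establishes only that the restriction of $I$ to $ext(B_{X^*})$ is continuous; continuity of $I:(B_{X^*},w^*)\ra(B_{X^*},w)$ \emph{at} a point of $ext(B_{X^*})$, which is how the paper uses this notion, requires controlling arbitrary nets in $B_{X^*}$ converging $w^*$ to $\pm\de_t$, not just nets of point masses, and your argument says nothing about those. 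The step ``hence $I$ is continuous at every point of $ext(B_{X^*})$'' is therefore not justified as written. The repair is short: either quote the paper's route (Hahn--Banach smoothness of $c_0(K)$ together with Theorem~\ref{T5}), or observe that $C_0(K)$ is an $L_1$-predual and feed the $w^*$-discreteness of $ext(B_{X^*})$ that you did establish into Theorem~\ref{T4}, which upgrades the conclusion to Hahn--Banach smoothness and hence to continuity on the whole sphere.
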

\begin{proof}
It suffices to prove $(a)\Ra (b)$. Note that if $t\in L$ is a limit point of $L$, then from Proposition~\ref{E4}, one can see that for $t\in L, \de_t\in S_{X^*}$ is not a PC of $I$. This concludes $(a)\Ra (b)$.
\end{proof}

\begin{Cor}\label{C5}
Let $L$ be a locally compact Hausdorff space and $X=C_0(L)$. Then \tfae.
\bla
\item $L$ has no isolated point.
\item No point in $ext (B_{X^*})$ can be a PC of $I:(B_{X^*},w^*)\ra (B_{X^*},w)$.
\item No point in $S_{X^*}$ can be a PC of $I:(B_{X^*},w^*)\ra (B_{X^*},w)$.
\item No $M$-ideal of $X$ can be weakly Hahn-Banach smooth.
\item No subspace of $X$ can have property-$(wU)$ in $X^{**}$.
\item No 1-dimensional subspace of $X$ can have property-$(wU)$ in $X^{**}$.
\el
\end{Cor}
\begin{proof}
$(a)\Ra (b).$ Follows from Corollary~\ref{C3}.

$(b)\Ra (c) ~\&~ (c)\Ra (a).$ Follows from Corollary~\ref{C4}.

$(c)\Ra (d).$ Follows from Proposition~\ref{P1}.

$(d)\Ra (a).$ Suppose that $L$ has an isolated point, $\{t\}$. Let $D=L\setminus\{t\}$, then $J_D=\{f\in C_0(L):f(s)=0\text{ for all }s\in D\}$ is an $M$-ideal ( also $M$-summand) in $X$ and $\dim(J_D)=1$. This means that $J_D$ is Hahn-Banach smooth, a contradiction.

$(c)\Ra (e).$ If possible, assume that there exists a subspace of $X$ which has property-$(wU)$ in $X^{**}$. This implies that there exists $x^*\in S_{X^*}$ which is a $w^*-w$-PC, a contradiction to $(b)$.

$(e)\Ra (a).$ If possible, assume that there exists an isolated point $t\in L$. Assume $J_D=\{f\in C_0(L):f(s)=0\text{ for all }s\in D\}$. Note that $ J_D^*=span\{\de_t\}$. Since $J_D$ is an $M$-ideal in $X$, $J_D$ has the property-$(U)$ in $X$. Again, since $t$ is an isolated point of $L$, $\de_t$ has a unique norm preserving extension from $X$ to $X^{**}$ (by Lemma~\ref{L1}). Consequently $J_D$ has the property-$(U)$ in $X^{**}$, a contradiction.

$(f)\Ra (a).$ An identical proof will work as stated in $(e)\Ra (a)$.
\end{proof}

\begin{Rem}
Note that for those locally compact Hausdorff spaces $L$ stated in Corollary~\ref{C5}, if $Y$ is any predual of $C_0(L)^*$ then $Y$ cannot be weakly Hahn-Banach smooth. In fact, if $Y$ is weakly Hahn-Banach smooth, then $C_0(L)^*$ becomes a space with the Radon-Nikod\'{y}m property which forces $C_0(L)$ to be an Asplund space.
\end{Rem}

\section*{Appendix}
For a compact Hausdorff space $K$, by $M(K)$ we denote the set of all regular Borel measures on $K$. It is well-known that $M(K)$ plays the role of a dual Banach space of $C(K)$ with respect to the total variation norm in $M(K)$. We  now derive several results concerning the space $A(S)$, which consists of all real-valued affine continuous functions defined on a compact convex set $S$ from a locally convex space $E$.

We start by adding the fundamentals of convexity theory, which are necessary for our next observations. In this context, we direct the reader's attention to Chapter I of \cite{EM} and Chapter II, section~6 of \cite{AE}. Recall that a compact convex set $S$ in a locally convex space $E$ is said to be a {\it Choquet simplex} if each point $s\in S$ can be represented by a unique maximal measure $\mu$ on $S$. A {\it Bauer simplex} $S$ is a Choquet simplex where $ext (S)$ is closed with respect to the topology endowed on $S$. It is important to note that when $K$ is a compact Hausdorff space, the set of all probability measures on $K$, denoted as $P(K)$, represents a $w^*$-compact convex set within $M(K) (\cong C(K)^*)$. $P(K)$ forms a Bauer simplex, and we have $A(P(K))\cong C(K)$. In fact, in this case, the topology on $K$ can be transferred to the set $ext (P(K))$ through the homeomorphism $\de:K\ra ext (P(K))$. Here, for $k\in K$, by $\de_k (=\de(k))$ we mean the point evaluation functional at $k$. With this identification, one can extend a measure $\mu\in M(K)$ to a measure on $P(K)$. It is evident that if $\mu\in M(K)$ is a probability measure, then upon extending it to a measure on $P(K)$, it yields the representing measure of $\mu$, which is also maximal.

We now give an example of a simplex $S$, other than the Bauer simplexes, where $ext(S)$ is dense in $S$, known as {\it Poulsen simplex}. 
\begin{ex}\cite[p.130]{AE}
 We construct simplexes $S_n$ in Euclidean $n$-space $\mb{R}^n$, embedded in $\ell_2$ in the natural way.

Let $S_1=[0,\frac{1}{2}]$ and let $S_2=conv\Big(S_1\cup \{(0,\frac{1}{2})\}\Big)\subseteq{\mb{R}}^2.$
Next choose points $y_3,y_2,\ldots,y_k\in S_2$ forming a $\frac{1}{2^{2}}$- net for $S_2$, and let $$z_i=y_i+\frac{1}{2^i}e_i,\quad 3\leq i\leq k, \mbox{~and define}$$
 $$ S_i=conv\Big(S_2\cup \{z_3,\ldots,z_i\}\Big)\subseteq \mb{R}^i\mbox{~for~}3\leq i\leq k.$$
In particular, a $\frac{1}{2^{2}}$-net in $S_2$ gives the choice of $k$, and with that one can define $S_k$. Moreover $S_1\subseteq S_2\subseteq...\subseteq S_k$ and $ext(S_1)\subseteq ext(S_2)\subseteq\ldots\subseteq ext(S_k)$. Also note that $z_i$ is an extreme point in $S_i$.

Now choose a $\frac{1}{2^{k}}$-net in $S_k$ and define $S_{k+1},...$ by the same procedure. Now define $S_0=\big(\bigcup\limits_{n=1}^\iy S_n\big)$ and $S=\overline{S_0}$,
then the construction gives $\overline{ext(S)}=S$.   We claim the following facts for $S$.

{\sc Claim~1:~} $\bigcup\limits_{n=1}^\iy ext(S_n)=ext\Big(\bigcup\limits_{n=1}^\iy S_n\Big)$

This follows from the fact that each $S_n$ is a face of $S_m$, when $m>n$. Finally all $S_n$ are faces of $S_0$.

{\sc Claim~2:~} $K=\overline{\{k_n:n\in\mb{N}\}}=\overline{conv}\{k_n:n\in\mb{N}\}$.

Note that if $z\in \overline{conv}\{k_n:n\in\mb{N}\}$ and $\e>0$ there exists $w\in conv\{k_n:n\in\mb{N}\}$ such that $\|z-w\|<\e/2$. Now $w\in S_m$, for a sufficiently large $m$. Now choose sufficiently large $N\in\mb{N}$ that $\frac{1}{2^N}<\e/2$ and construct $S_N$ from its suitable predecessor $S_r, r>m$. Note that there exists $k_j\in ext(S_N)$ where $\|k_j-w\|<\frac{1}{2^N}<\e/2$. This concludes $\|z-k_j\|<\e$ and hence $z\in \overline{\{k_n:n\in\mb{N}\}}$.

{\sc Claim~3:~} Each point of $ext(S)$ is a limit point of $ext(S)$.

Let $s\in ext(S_0)$.
Since $s\in ext(S)\subseteq S_0$, so there exists $N\in \mb{N}$ such that $s\in S_N$. Now choose $\e>0$ and $M\in\mb{N}$ such that $\frac{1}{2^{M}}<\frac{\e}{2}$. Now we can find $k>M$ such that $S_{k+1}=conv\big( S_k\cup \{z_{k+1}\}\big)$, where $z_{k+1}=y_{k+1}+\frac{1}{2^{k+1}}e_{k+1}$ and $y_{k+1}\in S_k$ is an element from finite $\frac{1}{2^{k}}$-net in $S_k$. Moreover we can choose $y_{k+1}$ such that $\|s-y_{k+1}\|<\frac{1}{2^k}$. We already mentioned that $z_{k+1}\in ext(S_K)\subseteq ext(S_0)$. Also $\|s-z_{k+1}\|=\|s-y_{k+1}-\frac{1}{2^{k+1}}e_{k+1}\|\leq\|s-y_{k+1}\|+\frac{1}{2^{k+1}} <\frac{1}{2^k}+\frac{1}{2^{k+1}}<\frac{1}{2^M}+\frac{1}{2^M}=\e$. Hence $s$ is a limit point of $ext(S_0)$.

Finally, if $k\in ext (S)$ but $k\neq k_n$, where $k_n$'s are defined above. Then clearly it is a limit point of $ext(S)$. This establishes claim 3.
\end{ex}

\begin{center}
  \textbf{Acknowledgment}
\end{center}
The author would like to thank Dr. Ryotaro Tanaka and Dr. Tanmoy Paul for suggestions and remarks at the initial stage of this research.

\begin{center}
  \textbf{DECLARATION}
\end{center}
The authors declare that no data is generated during this investigation, and hence the 'data sharing' is not applicable for this research. In addition, the authors declare that there is no conflict of interest.

\Addresses
\end{document}